\newtheorem{theorem}{Theorem}[section]
\newtheorem{lemma}[theorem]{Lemma}
\newtheorem{corollary}[theorem]{Corollary}
\theoremstyle{definition}
\newtheorem{definition}[theorem]{Definition}
\newtheorem{example}[theorem]{Example}
\theoremstyle{remark}
\newtheorem{remark}[theorem]{Remark}
\renewcommand{\SS}{\ensuremath{\mathcal{S}}}
\newcommand{\K}{\ensuremath{\mathcal{K}}}
\newcommand{\eff}{\ensuremath{\mathbb{F}}}
\newcommand{\B}{\ensuremath{\mathcal{B}}}
\renewcommand{\P}{\ensuremath{\mathcal{P}}}
\newcommand{\C}{\ensuremath{\mathcal{C}}}
\newcommand{\M}{\ensuremath{\mathcal{M}}}
\newcommand{\prob}{\ensuremath{\mathrm{Pr}}}
\newcommand{\R}{\ensuremath{\mathcal{R}}}
\newcommand{\STS}{\ensuremath{\mathsf{STS}}}
\newcommand{\SQS}{\ensuremath{\mathsf{SQS}}}
\newcommand{\BIBD}{\ensuremath{\mathsf{BIBD}}}
\begin{document}

\title{A Network Reliability Approach to the Analysis of Combinatorial Repairable Threshold Schemes}

\author{Bailey Kacsmar}
\author{Douglas R.\ Stinson%
\thanks{D.R.\ Stinson's Research is supported by  NSERC discovery grant RGPIN-03882.}}
\affil{David R.\ Cheriton School of Computer Science, University of Waterloo,
Waterloo, Ontario N2L 3G1, Canada}


\date{\today}

\maketitle

\begin{center}
\emph{Dedicated to Bimal Roy}
\end{center}

\vspace{.25in}

\begin{abstract}
A \emph{repairable threshold scheme} (which we abbreviate to  \emph{RTS}) 
is a  $(\tau,n)$-threshold scheme in which a subset of players 
can ``repair'' another player's share in the event that their share has been lost or corrupted.
This will take place without the participation of the dealer who set up the scheme.
The repairing protocol should not compromise the (unconditional) security of the threshold scheme.
\emph{Combinatorial repairable threshold schemes} (or \emph{combinatorial RTS})
 were recently introduced by Stinson and Wei \cite{stinson-wei}. In these schemes, 
``multiple shares'' are distributed to each player, as defined by a suitable
combinatorial design called the \emph{distribution design}. In this paper, 
we study the \emph{reliability} of these combinatorial repairable threshold schemes in a setting
where players may not be available to take part in a repair of a given player's share.
Using techniques from network reliability theory, we consider the probability of existence of an available
repair set, as well as the expected number of available repair sets, for various types of distribution
designs.
\end{abstract}


\section{Introduction to Combinatorial Repairability}

Suppose that $\tau$ and $n$ are positive integers
such that  $\tau \leq n$. 
Informally, a \emph{$(\tau,n)$-threshold scheme} is 
a method whereby a {\it dealer} chooses a {\it secret} and distributes a {\it share} to 
each of $n$ {\it players} (denoted by $P_1, \dots , P_n$) such that the following two properties
are satisfied:
\begin{description}
\item[reconstruction] Any subset of $\tau$ players can compute the
secret from the shares that they collectively hold, and 
\item[secrecy] No subset of $\tau-1$ players can determine any information about 
the secret.
\end{description}
{We call $\tau$  the {\it threshold} of the scheme.}

In this paper, we are only interested in schemes that are {\it unconditionally secure}.
That is, all security results are valid against adversaries with unlimited computational power.

The efficiency of secret sharing is often measured in terms of the
\emph{information rate} of the scheme, which is defined to be the ratio
$\rho = \log _2 | \K | / \log _2 |  \SS |$
(where $\SS$ is the set of all possible shares and $\K$ is the set of all
possible secrets). That is, the information rate is the ratio of the size of the secret to the
size of a share.
For a threshold scheme, a fundamental result
states that $\rho \leq 1$. 

We briefly describe a standard construction for threshold schemes with optimal
information rate, namely, the classical
Shamir threshold scheme \cite{Shamir}. 
The construction takes place over a finite field $\eff_Q$, where $Q \geq n+1$.
\begin{enumerate}
\item In the {\bf Initialization Phase},  
the dealer, denoted by $D$, chooses $n$ distinct, 
non-zero elements of $\eff_Q$, denoted $x_i$, 
$1 \leq i \leq n $. The values $x_i$ are public.
For $1 \leq i \leq n$, $D$ gives the value $x_i$ to player $P_i$.  
\item In the {\bf Share Distribution} phase, 
$D$ chooses a secret \[K = a_0 \in \eff_Q.\]
Then $D$ secretly chooses (independently and uniformly at random)
 \[a_{1}, \dots, a_{\tau-1} \in \eff_Q.\]
Finally, for $1 \leq i \leq n$, 
$D$ computes the share $y_i= a(x_i)$, where 
\[a(x) = \sum _{j=0}^{\tau-1} a_j\, x^j,\]
and gives it to player $P_i$.
\end{enumerate}
Reconstruction is easily accomplished using the Lagrange interpolation formula
(see, e.g., \cite[\S11.5.1]{CTAP}).

The problem of {\it share repairability} has been considered by 
several authors in recent years (see Laing and Stinson \cite{laing-stinson} for a  survey on this topic). 
The problem setting is that a 
certain player $P_{\ell}$ (in a $(\tau,n)$-threshold scheme, say) loses their share. The goal is 
to find a ``secure'' protocol involving $P_{\ell}$ and a subset of the other
players that allows the missing share $y_{\ell}$ to be reconstructed.
(Of course the dealer could simply re-send the share to $P_{\ell}$, 
but we are considering a setting where the dealer is
no longer present in the scheme after the initial setup.)
In general, we will assume secure pairwise channels linking
pairs of players.

A combinatorial solution to this problem was proposed by Stinson and Wei \cite{stinson-wei}.
These schemes are termed \emph{combinatorial RTS}.
The construction is based on an old technique from \cite[Theorem 1]{BL},
namely, giving each player a subset of shares from an underlying threshold  scheme called a 
\emph{base scheme}\footnote{Actually, there are situations where there are efficiency 
advantages to use
a \emph{ramp scheme} as the base scheme, instead of a threshold scheme. This is addressed
in some detail in \cite{stinson-wei}. However, for the purposes of repairability,
it is irrelevant if we use a ramp scheme, as opposed to a threshold scheme. So we do not
discuss the use of ramp schemes in this paper.}

Suppose the base scheme is an $(\sigma,m)$-threshold scheme, say a Shamir scheme,
implemented over a finite field $\eff_Q$. We then give each 
player a certain subset of $d$ of the $m$ shares. A set system (or {\it design}) consisting
of $n$ blocks of size $d$, defined on a set of $m$ points, will
be used to do this. This design is termed the {\it distribution design}.

We will call the shares of the base
$(\sigma,m)$-threshold scheme {\it subshares}. Each share in the resulting
$(\tau,n)$-threshold scheme, which we call the \emph{expanded scheme}, consists of $d$ subshares.
Suppose the shares in the base scheme are denoted $s_1,\dots, s_m$, and suppose that the
points in the distribution design are denoted $1 , \dots , m$. Each player $P_i$ corresponds to a block
$B_i$ of the distribution design. For each point $x \in B_i$, the player $P_i$ is given the subshare
$s_x$. The points in a block are indices of subshares received by a given player. The blocks are
public information, while the values of the shares and subshares are secret.

We need to ensure that the relevant threshold
property is satisfied for the expanded threshold scheme. 
We also need to be able to repair the share of any player in the expanded scheme
by appropriately choosing a certain set of other players, who will then send
appropriate subshares to the player whose share is being repaired.

Let the blocks in the distribution design be denoted $B_1, \dots , B_n$ and let $X$
denote the set of $m$ points on which the design is defined. 
The desired threshold property for the expanded scheme will be satisfied
provided that the following two conditions hold in the distribution design:
\begin{equation}
\label{eq1}
\text{the union of any $\tau$ blocks contains at least $\sigma$ points}
\end{equation} and
\begin{equation}
\label{eq2}
\text{the union of any $\tau-1$ blocks contains at most $\sigma -1$ points.}
\end{equation}

Summarizing, we have the  following theorem from \cite{stinson-wei}.

\begin{theorem}
\label{threshold.thm}
Suppose $(X,\B)$ is a distribution design with $|X| = m$ and $|\B| = n$.
Let $\tau$ and $\sigma$ be positive integers and
suppose (\ref{eq1}) and (\ref{eq2}) are satisfied for the given distribution design.
Then, if we use a base $(\sigma,m)$-threshold scheme  in conjunction with the
given distribution design, we obtain an expanded $(\tau,n)$-threshold scheme.
\end{theorem}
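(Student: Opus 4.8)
The plan is to reduce both required properties of the expanded scheme directly to the corresponding properties of the base $(\sigma,m)$-threshold scheme, using conditions~(\ref{eq1}) and~(\ref{eq2}) only to control how many distinct subshares a coalition of players can collectively hold. The observation I would establish first is that the secret of the expanded scheme is, by construction, identical to the secret $K$ of the base scheme, and that the entire body of information available to any set of players $P_{i_1}, \dots, P_{i_k}$ is precisely the collection of base-scheme subshares $\{ s_x : x \in B_{i_1} \cup \cdots \cup B_{i_k}\}$ (the block labels, being public, convey nothing). Thus a coalition holding the blocks $B_{i_1}, \dots, B_{i_k}$ knows exactly the subshares indexed by the union of those blocks, and nothing more.

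For reconstruction, I would take an arbitrary set of $\tau$ players. By~(\ref{eq1}) the union of their $\tau$ blocks contains at least $\sigma$ points, so together they hold at least $\sigma$ subshares of the base scheme. Since the base scheme is a $(\sigma,m)$-threshold scheme, any $\sigma$ of its shares determine $K$, and hence these players can reconstruct the secret. (When the base scheme is a Shamir scheme, this is simply Lagrange interpolation.)

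For secrecy, I would take an arbitrary set of $\tau-1$ players. By~(\ref{eq2}) the union of their $\tau-1$ blocks contains at most $\sigma-1$ points, so together they hold at most $\sigma-1$ subshares. The perfect secrecy of the base scheme guarantees that no set of $\sigma-1$ or fewer subshares reveals any information about $K$; hence these $\tau-1$ players learn nothing about the secret. The point deserving a word of care here is the monotonicity of secrecy: holding fewer than $\sigma-1$ subshares is never more informative than holding exactly $\sigma-1$, so the bound ``at most $\sigma-1$'' supplied by~(\ref{eq2}) is exactly what is needed.

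The routine steps are genuinely routine once the reduction is in place; I expect the only place demanding attention is the verification that a coalition's knowledge is captured \emph{entirely} by the set of distinct subshares it holds — in particular, that a subshare held in common by several players conveys nothing beyond its single underlying value, and that coalitions larger than $\tau$ or smaller than $\tau-1$ reduce to the two boundary conditions (a larger coalition only enlarges the union of blocks, a smaller one only shrinks it). Granting this, both the reconstruction and secrecy properties of the expanded scheme follow at once from the threshold properties of the base scheme.
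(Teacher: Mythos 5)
Your proof is correct and matches the intended argument: the paper itself states this theorem without proof (deferring to Stinson and Wei \cite{stinson-wei}), but the reduction you give---a coalition's knowledge is exactly the set of subshares indexed by the union of its blocks, so (\ref{eq1}) guarantees $\tau$ players hold at least $\sigma$ subshares and can reconstruct, while (\ref{eq2}) plus monotonicity of secrecy guarantees $\tau-1$ players hold too few subshares to learn anything---is precisely the reasoning sketched in the discussion preceding the theorem. No gaps; nothing further is needed.
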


\subsection{Repairing a Share} 
 
Now, suppose we want to repair the share for a player $P_{\ell}$ 
corresponding to the block $B_{\ell}$. For each point $x \in B_{\ell}$, we  find another block
that contains $x$. 
The corresponding player can send the subshare $s_x$ corresponding to $x$
to $P_{\ell}$. We illustrate the technique with an example.

\begin{example}\label{STS9.ex}
Suppose we start with a $(9,3,1)$-$\BIBD$ (an affine plane of order $3$), which has
$n=12$ blocks of size $d=3$. There are $m=9$ points in the design.
We associate a block of the design with each player:
	\begin{align*}
	{P_1} &\leftrightarrow B_1 = {\{1, 2, 3\}} \quad & {P_2} &\leftrightarrow B_2 = {\{4, 5, 6\}}	\quad & {P_3} &\leftrightarrow B_3 = {\{7, 8, 9\}}	\\ 
	{P_4} &\leftrightarrow B_4 = {\{1, 4, 7\}}	\quad & {P_5} &\leftrightarrow B_5 = {\{2, 5, 8\}} \quad & {P_6} &\leftrightarrow B_6 = {\{3, 6, 9\}}	\\ 
	{P_7} &\leftrightarrow B_7 = {\{1, 5, 9\}}	\quad & {P_8} &\leftrightarrow B_8 = {\{2, 6, 7\}} \quad & {P_9} &\leftrightarrow B_9 = {\{3, 4, 8\}} \\
 {P_{10}} &\leftrightarrow B_{10} = {\{1, 6, 8\}} \quad & {P_{11}} & \leftrightarrow B_{11} = {\{2, 4, 9\}} \quad & {P_{12}} &\leftrightarrow B_{12} = {\{3, 5, 7\}}
	\end{align*}
Each player gets $d=3$ shares from a $(5,9)$-threshold scheme, as specified by the
	associated block.
This threshold scheme has nine shares, denoted ${s_1}$, $\dots$, ${s_9}$.
Each block lists the indices of shares held by a given player;  
thus $P_1$ has the shares $s_1$, $s_2$ and $s_3$. Each block contains three points and the union of
any two blocks contains at least five points. Thus (\ref{eq1}) and (\ref{eq2}) are satisfied for
$\tau = 2$ and $\sigma = 5$ and therefore the expanded scheme is a $(2,12)$-threshold scheme.
 
 Now suppose $P_1$ wishes to repair their share. $P_1$ requires the subshares $s_1$, $s_2$ and $s_3$.
The subshare $s_1$ can be obtained from $P_4, P_7$ or $P_{10}$;
the subshare $s_2$ can be obtained from $P_5, P_8$ or $P_{11}$;
and the subshare $s_3$ can be obtained from $P_6, P_9$ or $P_{12}$.
\hfill $\blacksquare$
\end{example}

In general, it is not a requirement that the $d$ subshares are obtained from $d$ different 
blocks. For example, it could happen that $d=3$, one block contributes two
subshares, and one block contributes one subshare during the repairing process.
See Section \ref{t.sec} for further discussion of this idea.

It is quite simple to analyze the security of combinatorial repairability. 
The main point to observe is that the information collectively held by any subset of players (after the 
repairing protocol is completed) consists only of their shares in the expanded scheme.
They did not obtain any information collectively that they did not already possess
before the execution of the repairing protocol.
So, it is immediate that a set $\tau-1$ players cannot compute the secret after the
repairing of a share occurs.

The paper \cite{stinson-wei} provides several constructions for combinatorial RTS.
Different distribution designs are studied and analyzed according to various metrics.
Here, we are only interested in repairability properties, so we do not address these
other metrics.

\subsection{Reliability}
\label{rel.sec}

Given a player $P_{\ell}$ in a combinatorial RTS, a subset of players that can repair $P_{\ell}$'s share is called 
a \emph{repair set} for  $P_{\ell}$. A repair set $\P$ for a player $P_{\ell}$ 
is \emph{minimal} if no proper
subset of $\P$ is a repair set for $P_{\ell}$.
In Example \ref{STS9.ex}, there are $3^3 = 27$ minimal repair sets for any given player.

We are interested in studying the situation where some of the players might not be available when 
asked to provide a subshare to repair another player's share. We will make the assumption that any player
is available with a fixed probability $p$ (and therefore unavailable with probability $1-p$).
We also assume that the availability of any player is independent of the availability
of any other player. A repair set is \emph{available} if every player in the set is available.

In the above setting, we can ask two basic questions for a given player associated with a given distribution design:
\begin{enumerate}
\item What is the probability $\R(p)$ that there is at least one available repair set?
\item What is the expected number $E(p)$ of available minimal repair sets?
\end{enumerate}

We illustrate these concepts by considering the distribution design presented in  Example \ref{STS9.ex}.

\begin{example}
\label{STS9-2.ex}
There is an available repair set for $P_1$ 
if and only if 
\begin{itemize}
\item at least one of $P_4, P_7$ or $P_{10}$ is available,
\item at least one of $P_5, P_8$ or $P_{11}$ is available, and 
\item at least one of $P_6, P_9$ or $P_{12}$ is available.
\end{itemize}
Therefore, for $P_1$,
\[ \R(p) = (1 - (1-p)^3)^3.\]
In fact, $\R(p)$ takes on the same value for any player in this RTS.

To compute the expected number of minimal repair sets, we observe that there are $27$ minimal repair sets,
each of which is available with probability $p^3$. By linearity of expectation, 
$E(p) = 27p^3$. Again, this value is the same for any player in the scheme.
\hfill $\blacksquare$
\end{example}

\subsection{Design Theory Definitions}

We now review some standard definitions and basic results from design theory.
Most of these results can be found in standard references such as \cite{HCD}.

\begin{definition} 
Suppose $2 \leq k < v$. A \textit{$(v,k,\lambda)$-balanced incomplete block design}, 
or \textit{$(v,k,\lambda)$-$\BIBD$}, is a design $(X, \B)$ such that: 
\begin{itemize}
  \item[1.] $|X| = v$,
  \item[2.] each block $B \in \B$ contains exactly $k$ points, and
  \item[3.] every pair of distinct points from $X$ is contained in exactly $\lambda$ blocks. 
\end{itemize}
\end{definition}

\begin{theorem}\label{repNum}
Every point in a $(v,k,\lambda)$-$\BIBD$ occurs in exactly 
\begin{center}
    $r=\frac{\lambda(v-1)}{k-1}$
\end{center}
blocks. The value $r$ is termed the \emph{replication number}.
\end{theorem}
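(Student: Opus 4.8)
The plan is to prove Theorem~\ref{repNum} by a standard double-counting argument. Fix an arbitrary point $x \in X$ and let $r_x$ denote the number of blocks containing $x$; the goal is to show that $r_x = \lambda(v-1)/(k-1)$ independently of the choice of $x$, which simultaneously establishes that the replication number is constant across all points and gives its value. I would count, in two different ways, the number of ordered pairs $(y, B)$ where $y \in X$ is a point distinct from $x$, $B \in \B$ is a block, and both $x$ and $y$ lie in $B$.

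For the first count, I would fix the point $y$ and ask how many blocks contain both $x$ and $y$. Since $\{x,y\}$ is a pair of distinct points, condition~3 in the definition of a $(v,k,\lambda)$-$\BIBD$ tells us that exactly $\lambda$ blocks contain this pair. There are $v-1$ choices for $y$ (namely any point other than $x$), so the total number of such pairs is $\lambda(v-1)$. For the second count, I would instead fix the block $B$ first. A block $B$ contributes to the count precisely when it contains $x$, and there are $r_x$ such blocks; for each of these, the companion point $y$ can be any of the remaining $k-1$ points of $B$ (by condition~2, $|B| = k$). Hence the total is $r_x(k-1)$.

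Equating the two counts gives $r_x(k-1) = \lambda(v-1)$, and solving yields
\[
r_x = \frac{\lambda(v-1)}{k-1}.
\]
Since this value does not depend on $x$, every point occurs in the same number $r$ of blocks, and $r = \lambda(v-1)/(k-1)$ as claimed.

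I do not anticipate a serious obstacle here; the argument is a clean instance of counting incidences two ways, and the hypotheses $2 \leq k < v$ guarantee that $k - 1 \geq 1$, so the division is well defined. The only point requiring a little care is making sure the quantity $r_x$ is shown to be the same for \emph{every} point, rather than merely computing an average: because the double count is carried out for a fixed but arbitrary $x$ and the right-hand side $\lambda(v-1)$ has no dependence on $x$, the constancy follows immediately rather than needing a separate regularity argument.
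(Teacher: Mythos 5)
Your proof is correct: the paper itself states this theorem without proof, citing it as a standard fact from design theory (it appears in references such as the \emph{Handbook of Combinatorial Designs}), and your double-counting of incidences $(y,B)$ with $x,y \in B$ is exactly the classical textbook argument for it. You also correctly handle the one subtle point, namely that fixing an arbitrary $x$ and obtaining an $x$-independent count establishes constancy of the replication number rather than merely an average.
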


\begin{theorem}
A $(v,k,\lambda)$-$\BIBD$ has exactly
\[b = \frac{vr}{k} = \frac{\lambda (v^2 -v)}{k^2 -k}\]
blocks of size $k$.
\end{theorem}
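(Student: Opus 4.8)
The plan is to prove the first equality $b = vr/k$ by a double-counting (incidence-counting) argument, and then obtain the second equality by substituting the formula for the replication number $r$ already established in Theorem \ref{repNum}.

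First I would count the number of incident pairs (flags) $(x, B)$ with $x \in X$, $B \in \B$, and $x \in B$, in two different ways. Counting by points: every point of $X$ lies in exactly $r$ blocks by Theorem \ref{repNum}, and there are $v$ points, so the total number of such pairs is $vr$. Counting by blocks: every block contains exactly $k$ points by part~2 of the definition of a $\BIBD$, and there are $b$ blocks, so the total is $bk$. Since both expressions count the same set of incident pairs, I would conclude
\[
bk = vr,
\]
and hence $b = vr/k$.

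For the second equality I would simply substitute $r = \lambda(v-1)/(k-1)$ from Theorem \ref{repNum} into $b = vr/k$, obtaining
\[
b = \frac{v}{k} \cdot \frac{\lambda(v-1)}{k-1} = \frac{\lambda\, v(v-1)}{k(k-1)} = \frac{\lambda(v^2 - v)}{k^2 - k},
\]
which is the claimed closed form.

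There is no substantial obstacle here: the argument is a clean two-way count followed by a routine algebraic rewriting. The only point requiring a little care is to make sure the double count is set up correctly (that both tallies genuinely enumerate the same incidence relation, so that no block or point is over- or under-counted), and that the substitution of $r$ is legitimate, which it is since Theorem \ref{repNum} guarantees $r$ is a well-defined integer independent of the chosen point. Implicitly this also shows that $k \mid vr$, so that $b$ is indeed an integer, consistent with $(X,\B)$ being a genuine design.
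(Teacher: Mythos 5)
Your proof is correct: the flag-counting argument giving $bk = vr$, followed by substitution of $r = \lambda(v-1)/(k-1)$ from Theorem \ref{repNum}, is the standard derivation of this identity. The paper itself states this result without proof (it is background material cited from standard references), and your argument is exactly the one those references use, so there is nothing to reconcile.
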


\begin{definition} 
A \textit{Steiner triple system}, or $\STS(v)$,  is a $(v,3,1)$-$\BIBD$. 
\end{definition}

\begin{theorem}
There exists an $\STS(v)$ if and only if $v \equiv 1, 3 \pmod  6$, $v \geq 7$.
\end{theorem}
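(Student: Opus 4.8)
The plan is to separate the two implications, since the necessity of the congruence condition is immediate from the integrality results already recorded, whereas sufficiency carries the real content and requires explicit constructions.

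First I would dispose of necessity. Applying Theorem~\ref{repNum} to an $\STS(v)$, i.e. a $(v,3,1)$-$\BIBD$, gives replication number $r = (v-1)/2$, so $v$ must be odd. The block-count theorem then gives $b = v(v-1)/6$, forcing $6 \mid v(v-1)$ and in particular $3 \mid v(v-1)$, i.e. $v \equiv 0$ or $1 \pmod 3$. Intersecting ``$v$ odd'' with ``$v \equiv 0,1 \pmod 3$'' and reducing modulo $6$ leaves exactly $v \equiv 1, 3 \pmod 6$; the odd residue $v \equiv 5 \pmod 6$ is the one eliminated by the mod-$3$ condition. The hypothesis $v \geq 7$ merely discards the two degenerate admissible values $v \in \{1,3\}$.

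For sufficiency I would produce an actual $\STS(v)$ for every admissible $v$, organizing the argument by the two residue classes. For $v \equiv 3 \pmod 6$ I would use the Bose construction: write $v = 3u$ with $u = 2t+1$ odd, equip $Q = \mathbb{Z}_u$ with the commutative idempotent quasigroup $a \circ b = (t+1)(a+b) \bmod u$ (idempotent because $2(t+1) \equiv 1 \pmod u$), take point set $Q \times \{0,1,2\}$, and take as triples all ``vertical'' blocks $\{(a,0),(a,1),(a,2)\}$ together with $\{(a,i),(b,i),(a \circ b,\, i+1)\}$ for $a \neq b$ (indices mod $3$); a short case analysis then verifies that every pair of points lies in exactly one triple. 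For $v \equiv 1 \pmod 6$ I would use the analogous but fussier Skolem-type construction, based on a half-idempotent commutative quasigroup (equivalently a Skolem sequence) together with one adjoined point.

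Finally, any admissible values not reached directly I would fill in using recursive constructions that preserve the two residue classes---for instance the product construction (an $\STS(u)$ and an $\STS(w)$ yield an $\STS(uw)$) and a doubling construction $v \mapsto 2v+1$---bootstrapped from the small base designs $\STS(7)$ (the Fano plane), $\STS(9)$, $\STS(13)$ and $\STS(15)$. The main obstacle is entirely in this sufficiency direction: checking that each proposed family of triples covers every pair exactly once, which in turn depends on the idempotency and commutativity of the quasigroups chosen in the direct constructions. Necessity, by contrast, is an immediate divisibility argument from the stated theorems.
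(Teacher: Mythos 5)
The paper never proves this statement: it appears in the list of standard design-theoretic background facts (Section 1.3) and is simply quoted from the literature \cite{HCD}, so there is no in-paper argument to compare yours against. Your proposal is the classical existence proof, and its overall structure is sound. The necessity half is complete and correct: integrality of $r=(v-1)/2$ (via Theorem~\ref{repNum}) forces $v$ odd, integrality of $b=v(v-1)/6$ forces $3\mid v(v-1)$, which eliminates $v\equiv 5 \pmod 6$, leaving exactly $v\equiv 1,3\pmod 6$. Your Bose construction for $v\equiv 3\pmod 6$ is also correctly specified: $a\circ b=(t+1)(a+b)\bmod u$ is a commutative idempotent quasigroup precisely because $2(t+1)\equiv 1 \pmod u$, and the pair-coverage verification is indeed a routine case analysis (horizontal pairs are covered by the quasigroup triples via unique solvability of $a\circ b=c$, vertical/diagonal pairs by idempotency and the vertical blocks).

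Two caveats keep this from being a finished proof. First, the $v\equiv 1\pmod 6$ case---half of the theorem---is only named, not carried out; the Skolem-type construction with a half-idempotent quasigroup on $2t$ symbols plus an adjoined point $\infty$ has genuinely fussier details (the triples through $\infty$ and the half-idempotency condition), and those details are where errors typically creep in. Second, your closing paragraph about recursive constructions ($uw$-products, $v\mapsto 2v+1$, small base designs) is unnecessary and slightly undermines the argument: the Bose and Skolem constructions already reach \emph{every} admissible $v$ in their respective residue classes, so there are no values ``not reached directly,'' and presenting recursion as a fallback suggests uncertainty about whether the direct constructions suffice. If you complete the Skolem case explicitly, you can delete that paragraph entirely.
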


\begin{definition}\label{def:tdesign}
A \emph{$t$-$(v,k,\lambda)$-design} is a design where: 
\begin{itemize}
    \item[1.] $|X|=v$,
    \item[2.] each block $B \in \B$ contains exactly $k$ points, and
    \item[3.] every set of $t$ points from the set $X$ occurs in exactly $\lambda$ blocks.
\end{itemize}
\end{definition}

\begin{definition}
A $3$-$(v,4,1)$-design is a \textit{Steiner quadruple system} of order $v$, denoted $\SQS(v)$. 
\end{definition}

\begin{theorem}
An $\SQS(v)$ exists if and  only if $v \equiv 2, 4  \pmod 6$.
\end{theorem}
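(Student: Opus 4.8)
The plan is to treat the two directions entirely separately, since they are of completely different character: the \emph{necessity} of the conditions is a short divisibility computation, whereas the \emph{sufficiency} is a deep construction problem.

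First I would handle necessity by the standard counting argument for the parameters of a $3$-$(v,4,1)$-design. For a fixed set of $i$ points I would count the blocks $\lambda_i$ through those points by double counting incidences between triples and blocks. Fixing a pair $\{x,y\}$, the $v-2$ triples through it are each in a unique block, and each block through the pair contains exactly two such triples, so $2\lambda_2 = v-2$, i.e.\ $\lambda_2 = (v-2)/2$. Fixing a single point and counting triples through it gives $3\lambda_1 = \binom{v-1}{2}$, i.e.\ $\lambda_1 = (v-1)(v-2)/6$, and the total block count is $b = v(v-1)(v-2)/24$. Each of these must be a nonnegative integer. Integrality of $\lambda_2$ forces $v$ to be even; then reducing $\lambda_1$ modulo $6$ shows $v \equiv 0 \pmod 6$ is impossible (the product $(v-1)(v-2)$ is then not divisible by $3$) while $v \equiv 2,4 \pmod 6$ are both admissible, and one checks $b$ is automatically an integer in those cases. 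This settles the ``only if'' direction cleanly.

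The hard direction is sufficiency: exhibiting an $\SQS(v)$ for \emph{every} admissible $v$. Here I would not expect any elementary argument. Instead the plan is to build large systems from smaller ones by recursive product-type constructions, seeded by a handful of base cases (the trivial systems on $2$ and $4$ points, together with explicit $\SQS$'s on $8$, $10$, $14$, $16$ points). The workhorse is a doubling construction showing that an $\SQS(v)$ yields an $\SQS(2v)$ (note this toggles between the residue classes $2$ and $4$ modulo $6$), supplemented by a small number of further recursions so that repeated application, starting from the base cases, reaches every residue-admissible order.

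The main obstacle is precisely assembling a family of recursions whose orbits cover all of $v \equiv 2,4 \pmod 6$ with no gaps, and verifying that each recursion preserves the defining property that every triple lies in exactly one block; this verification is an intricate combinatorial bookkeeping task rather than a one-line check. This is the content of Hanani's theorem on quadruple systems, and in practice I would cite the construction from the standard design-theory literature (such as the Handbook reference already used in the paper) rather than reproduce it, since it is far deeper than the counting that yields necessity.
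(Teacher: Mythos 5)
Your proposal is sound, and it actually does more work than the paper itself: the paper states this result without proof, as one of several standard design-theory facts quoted from the Handbook \cite{HCD} (it is Hanani's 1960 theorem on quadruple systems), so there is no internal proof to compare against. Your necessity argument is complete and correct: double counting gives $2\lambda_2 = v-2$ and $3\lambda_1 = \binom{v-1}{2}$, so $\lambda_2 = (v-2)/2$ forces $v$ even, and integrality of $\lambda_1 = (v-1)(v-2)/6$ then rules out $v \equiv 0 \pmod 6$ since $(v-1)(v-2) \equiv 2 \pmod 3$ in that case; one also checks $b = v(v-1)(v-2)/24$ is an integer for $v \equiv 2,4 \pmod 6$, so no further obstruction arises. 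For sufficiency, your description of the strategy is accurate — a doubling construction $\SQS(v) \to \SQS(2v)$ (which, as you note, swaps the residue classes $2$ and $4$ modulo $6$) cannot by itself reach all admissible orders from finitely many base cases, so Hanani's proof indeed requires several additional recursions (of the shape $v \to 3v-2$, $v \to 3v-8$, and others) together with explicit small systems, and verifying each recursion is a substantial case analysis. Citing this rather than reproducing it is the right call and is exactly what the paper does; the only thing I would flag is cosmetic, namely that the orders $v=2$ and $v=4$ covered by the residue condition are degenerate (no blocks, respectively one block), which your list of base cases already handles implicitly.
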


\begin{theorem}\cite[Theorem II.4.8]{HCD}\label{thm:r_i}
The \emph{$i^{\text{th}}$ replication number}, denoted $r_i$, of a  $t$-$(v, k, 1)$-design is defined to be  
the number of blocks containing any given set of $i$ points. It is known that
\[r_i= \frac{ \lambda\binom{v-i}{t-i}}{\binom{k-i}{t-i}}, \] 
for $1 \leq i \leq t$. 
\end{theorem}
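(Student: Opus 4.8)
The plan is to prove the formula by a direct double-counting argument. I would fix an arbitrary $i$-subset $I \subseteq X$ with $|I| = i$ and count, in two ways, the incidences between the $t$-subsets of $X$ that contain $I$ and the blocks that contain those $t$-subsets. Concretely, I would consider the set of pairs $(T,B)$ where $T$ is a $t$-subset of $X$ with $I \subseteq T$ and $B \in \B$ is a block with $T \subseteq B$.

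First I would count these pairs by choosing $T$ first. Each admissible $T$ is obtained by adjoining $t-i$ of the $v-i$ points of $X \setminus I$ to $I$, so there are $\binom{v-i}{t-i}$ choices for $T$; and by the defining property of a $t$-$(v,k,\lambda)$-design, each fixed $t$-set lies in exactly $\lambda$ blocks. Hence the number of pairs is $\lambda\binom{v-i}{t-i}$. Next I would count the same pairs by choosing $B$ first. A block $B$ contributes a pair only when $I \subseteq B$, and there are exactly $r_i$ such blocks by definition; for each of these the admissible sets $T$ are precisely those obtained by adjoining $t-i$ of the $k-i$ points of $B \setminus I$ to $I$, giving $\binom{k-i}{t-i}$ choices. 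Thus the number of pairs is also $r_i\binom{k-i}{t-i}$. Equating the two counts yields $r_i\binom{k-i}{t-i} = \lambda\binom{v-i}{t-i}$, and solving for $r_i$ produces the claimed formula. As a byproduct, the right-hand side does not depend on the particular set $I$ chosen, which simultaneously confirms that $r_i$ is well defined.

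There is no deep obstacle here; the points requiring care are purely bookkeeping. I would be explicit that blocks not containing $I$ contribute nothing to the second count, so that the factor $r_i$ is exactly the number of blocks through $I$, and I would note that $\binom{k-i}{t-i}$ is nonzero throughout the range $1 \leq i \leq t$ (using $k \geq t$, so that $k - i \geq t - i \geq 0$; when $i = t$ this gives $\binom{k-t}{0} = 1$ and recovers the base case $r_t = \lambda$). An equivalent route would be to establish the one-step recurrence $(k-i)\,r_i = (v-i)\,r_{i+1}$ by counting flags $(x,B)$ with $x \in X \setminus I$ and $I \cup \{x\} \subseteq B$, and then to unroll it from $r_t = \lambda$; this yields the same closed form but requires telescoping a product of ratios, so I would favour the direct two-way count above.
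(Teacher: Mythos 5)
Your proof is correct, and it is the standard double-counting argument for this classical fact. Note that the paper itself offers no proof at all: the result is simply quoted from \cite[Theorem II.4.8]{HCD}, so there is nothing to diverge from, and your incidence count over pairs $(T,B)$ with $I \subseteq T \subseteq B$ is exactly the textbook derivation (it also, as you observe, establishes that $r_i$ is well defined, i.e.\ independent of the choice of the $i$-set $I$, which the statement implicitly assumes). One cosmetic remark: the theorem as stated concerns $t$-$(v,k,1)$-designs yet retains a $\lambda$ in the formula; your argument proves the general $\lambda$ version, of which the stated case is the specialization $\lambda = 1$.
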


\begin{theorem} 
The number of blocks in a  $t$-$(v, k, 1)$-design is
\[b=\frac{\binom{v}{t}}{\binom{k}{t}}=\frac{vr_1}{k}.\]
\end{theorem}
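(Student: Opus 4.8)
The plan is to prove both equalities by double counting incidences. For the first equality, I would count in two ways the number of pairs $(T,B)$ where $T$ is a $t$-subset of $X$ and $B$ is a block with $T \subseteq B$. Summing over the $t$-subsets first: since every set of $t$ points lies in exactly $\lambda = 1$ block, each of the $\binom{v}{t}$ choices of $T$ contributes exactly one such pair, giving a total of $\binom{v}{t}$. Summing over the blocks first: each of the $b$ blocks has exactly $k$ points and hence contains exactly $\binom{k}{t}$ subsets of size $t$, giving a total of $b\binom{k}{t}$. Equating the two counts yields $b\binom{k}{t} = \binom{v}{t}$, that is, $b = \binom{v}{t}/\binom{k}{t}$.

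For the second equality, I would count the flags $(x,B)$ where $x$ is a point of $X$ and $B$ is a block with $x \in B$. Counting over points first and invoking Theorem \ref{thm:r_i} (which guarantees that every point lies in the same number $r_1$ of blocks) gives $v r_1$; counting over blocks first, using that each block has $k$ points, gives $bk$. Hence $bk = v r_1$, i.e. $b = v r_1 / k$.

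I expect no serious obstacle here, as the argument is routine double counting. The only points requiring care are to invoke the $\lambda = 1$ hypothesis correctly in the first count (so that each $t$-subset contributes exactly one block), and to appeal to Theorem \ref{thm:r_i} to ensure that $r_1$ is well defined, i.e. independent of the chosen point, in the second count. Alternatively, the second equality could be checked purely algebraically by substituting the formula $r_1 = \binom{v-1}{t-1}/\binom{k-1}{t-1}$ from Theorem \ref{thm:r_i} (with $\lambda = 1$) into $v r_1 / k$ and simplifying to $\binom{v}{t}/\binom{k}{t}$; but the incidence-counting argument is cleaner and more self-contained, so that is the route I would take.
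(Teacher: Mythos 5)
Your proof is correct. Note that the paper itself states this theorem without proof---it appears in the background section on design-theory definitions, where such facts are quoted as standard results (from the Handbook of Combinatorial Designs \cite{HCD})---so there is no proof in the paper to compare against; your double-counting argument (counting incidences $(T,B)$ with $T \subseteq B$ for the first equality, and flags $(x,B)$ for the second) is precisely the standard textbook derivation, and your care in invoking $\lambda = 1$ and the well-definedness of $r_1$ via Theorem \ref{thm:r_i} is exactly what is needed.
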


\begin{definition}
An \emph{inversive geometry} is a $3$-$(n^d+1, n+1, 1)$-design, where $d \geq 2$. 
\end{definition}

\begin{theorem}
An inversive geometry
exsits for any $d \geq 2$ if $n$ is a prime power.
\end{theorem}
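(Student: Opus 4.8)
The plan is to realize the inversive geometry as the classical M\"obius/Miquelian model built from a projective line and one of its sub-lines. Write $q = n^d$; since $n$ is a prime power, $\eff_n$ is a subfield of $\eff_q$ (the exponent of $n$ divides that of $q$, as $n \geq 2$). I would take the point set to be the projective line $X = \eff_q \cup \{\infty\}$, so that $|X| = q+1 = n^d+1$, and let $L_0 = \eff_n \cup \{\infty\} \subseteq X$ be the sub-line, with $|L_0| = n+1$. The group $G = PGL(2,q)$ of M\"obius transformations $z \mapsto (az+b)/(cz+d)$ acts on $X$, and I take the block set $\B$ to be the orbit of $L_0$ under $G$; that is, every block has the form $g \cdot L_0$ for some $g \in G$. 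Since each $g$ is a bijection of $X$, every block has exactly $n+1$ points, so two of the three requirements of Definition~\ref{def:tdesign} (with $t = 3$, $k = n+1$, $\lambda = 1$) hold immediately. It remains to show that every set of three distinct points lies in exactly one block.

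For existence I would invoke the fundamental fact that $G = PGL(2,q)$ acts sharply $3$-transitively on $X$. Given three distinct points $a, b, c \in X$, and any three distinct points of $L_0$ (these exist since $n+1 \geq 3$), there is a $g \in G$ carrying the latter triple to $(a,b,c)$; then $g \cdot L_0$ is a block containing $a, b, c$.

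The main work, and the step I expect to be the crux, is uniqueness: if $g_1 L_0$ and $g_2 L_0$ both contain $\{a,b,c\}$, then $g_1 L_0 = g_2 L_0$. Setting $(p_1,p_2,p_3) = (g_1^{-1}(a), g_1^{-1}(b), g_1^{-1}(c))$ and $(q_1,q_2,q_3) = (g_2^{-1}(a), g_2^{-1}(b), g_2^{-1}(c))$ yields two ordered triples of distinct points lying in $L_0$, and $h = g_2^{-1} g_1 \in G$ maps $(p_1,p_2,p_3)$ to $(q_1,q_2,q_3)$. The key observation is that the subgroup $PGL(2,n) \leq G$ also acts sharply $3$-transitively on the $n+1$ points of $L_0$, so there is a unique $h_0 \in PGL(2,n)$ realizing this same action on the triple; regarding $h_0$ as an element of $G$ and applying the sharp $3$-transitivity of $G$ forces $h = h_0 \in PGL(2,n)$. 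Consequently $h$ stabilizes $L_0$ setwise, whence $g_1 L_0 = g_2 h L_0 = g_2 L_0$, as required. This completes the verification that $(X,\B)$ is a $3$-$(n^d+1,\,n+1,\,1)$-design. As a consistency check, the resulting number of blocks $|G|/|PGL(2,n)| = q(q^2-1)/\bigl(n(n^2-1)\bigr)$ agrees with the value $\binom{n^d+1}{3} / \binom{n+1}{3}$ predicted by the block-count formula for a $t$-$(v,k,1)$-design.
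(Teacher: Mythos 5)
Your proof is correct. Note that the paper itself offers no proof of this statement: it appears in the background section on design-theoretic preliminaries as a known result (cited to the standard literature, e.g.\ the Handbook \cite{HCD}), so there is no in-paper argument to compare against. What you have written is the classical (Miquelian) construction of these geometries --- taking the blocks to be the $PGL(2,n^d)$-orbit of the sub-line $\eff_n \cup \{\infty\}$ in the projective line over $\eff_{n^d}$ --- and your uniqueness step (identifying the triple-matching element of $PGL(2,n^d)$ with the unique element of the subgroup $PGL(2,n)$ via sharp $3$-transitivity, hence concluding it stabilizes $L_0$) is exactly the standard way this is made rigorous; the block-count consistency check at the end is a nice confirmation.
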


\subsection{Organization of the Paper}

The remaining sections of the paper are organized as follows.
In Section \ref{bibd.sec}, we study the reliability metrics for
BIBDs. In Section \ref{t.sec}, we turn to $t$-designs with $t >2$,
which have not previously been studied as distribution designs.
After addressing the possible thresholds that can be obtained,
we again consider the reliability metrics. Finally, Section \ref{sum.sec}
is a brief summary.

\section{Using BIBDs as Distribution Designs}
\label{bibd.sec}

Stinson and Wei \cite{stinson-wei} examined several types of BIBDs with $\lambda = 1$ 
for use as distribution designs in combinatorial RTS. They studied the thresholds of these
RTS as well as their efficiency with respect to storage, communication complexity
and computational complexity. In this section, we study the reliability of these RTS
using the measures defined in Section \ref{rel.sec}. 

Before proceeding further, we define some notation that will be used in the rest of the paper.

\begin{definition}
\label{rel.defn}
Suppose $(X,\B)$ is a  distribution design for a combinatorial RTS.
For any fixed block $B_i \in \B$, let $P_i$ be the corresponding 
player in the RTS. Further, for any $x_j \in B_i$, 
define \[ \C_j = \{ B \in \B \setminus \{ B_i\}: x_j \in B\}.\]
Finally, let $\P_j = \{ P_i: B_i \in \C_j\}$.
\end{definition}

\begin{example}
We refer to Examples  \ref{STS9.ex} and \ref{STS9-2.ex}.  
For the block $B_1 = \{1,2,3\}$, we have 
\begin{eqnarray*}
\C_1 &=& \{ B_4, B_7, B_{10}\} \\ 
\C_2 &=& \{B_5, B_8, B_{11}\} \\ 
\C_3 &=&\{B_6, B_9, B_{12}\}
\end{eqnarray*}
and therefore 
\begin{eqnarray*}
\P_1 &=& \{ P_4, P_7, P_{10}\} \\ 
\P_2 &=& \{P_5, P_8, P_{11}\} \\ 
\P_3 &=&\{P_6, P_9, P_{12}\}.
\end{eqnarray*}
\hfill $\blacksquare$
\end{example}

As in Section \ref{rel.sec}, we define $\R(p)$ to be the probability
that there is at least one available repair set for a given player.

\begin{theorem}\label{thm:existsBIBD}
Suppose $(X,\B)$ is a $(v,k,1)$-$\BIBD$ 
that is used as a distribution design for a combinatorial RTS, and let $P_i$ be any
player in the scheme. Then
\[\R(p)=(1-(1-p)^{r-1})^k .\]
\end{theorem}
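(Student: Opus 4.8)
The plan is to reduce the existence of an available repair set for $P_i$ to a collection of mutually independent events, one for each point of the block $B_i$. First I would observe that repairing the share of $P_i$ requires recovering the subshare $s_{x_j}$ for every point $x_j \in B_i$, and that $s_{x_j}$ can be supplied by precisely the players in $\P_j$ (as in Definition~\ref{rel.defn}). Hence an available repair set for $P_i$ exists if and only if, for each of the $k$ points $x_j \in B_i$, at least one player in $\P_j$ is available.

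The key structural step is to show that the sets $\P_1, \dots, \P_k$ are pairwise disjoint. This is where the hypothesis $\lambda = 1$ does the work: any two distinct blocks of a $(v,k,1)$-$\BIBD$ meet in at most one point, so no block other than $B_i$ can contain two distinct points of $B_i$. Consequently the index sets $\C_j$, and therefore the player sets $\P_j$, share no common member. Since distinct players have independent availabilities by assumption, the disjointness of the $\P_j$ makes the $k$ events ``at least one player of $\P_j$ is available'' mutually independent. I expect this disjointness claim to be the one genuinely nontrivial point of the argument; once it is established, the product structure of the answer follows immediately.

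Next I would count $|\P_j|$. By Theorem~\ref{repNum}, each point $x_j$ lies in exactly $r = (v-1)/(k-1)$ blocks; discarding $B_i$ itself leaves $|\P_j| = r-1$. The probability that \emph{every} player in $\P_j$ is unavailable is $(1-p)^{r-1}$, so the probability that $\P_j$ contains at least one available player is $1 - (1-p)^{r-1}$. Multiplying over the $k$ independent events yields
\[
\R(p) = \left(1 - (1-p)^{r-1}\right)^{k}.
\]
Since every block of the $\BIBD$ has size $k$ and every point has the same replication number $r$, the values $k$ and $r-1$ do not depend on the choice of $B_i$, which explains why $\R(p)$ is the same for every player; the remaining arithmetic is a direct application of the replication-number formula and requires no further work.
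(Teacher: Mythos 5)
Your proposal is correct and follows essentially the same route as the paper's proof: decompose the repair condition into the sets $\C_j$ (equivalently $\P_j$) of Definition~\ref{rel.defn}, note that $\lambda=1$ forces these sets to be pairwise disjoint with $|\C_j|=r-1$, and multiply the $k$ independent probabilities $1-(1-p)^{r-1}$. The only difference is cosmetic—you spell out why $\lambda=1$ yields disjointness, which the paper dismisses with ``clearly.''
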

\begin{proof}
Let the block corresponding to $P_i$ be $B_i = \{x_1, \dots , x_k\}$.
Consider  the sets $\C_j$, for $1 \leq j \leq k$, as defined in Definition \ref{rel.defn}. 
Clearly $|\C_j| = r-1$ for $1 \leq j \leq k$ and
$\C_j \cap \C_{j'} = \emptyset$ if $j \neq j'$. 

The probability that at least one player in $\P_j$ is available is $1-(1-p)^{r-1}$.
Then, since the sets $\P_j$ are disjoint, the probability that
at least one player in each $\P_j$ is available is $(1-(1-p)^{r-1})^k$.
\end{proof}

Now we consider the expected number of repair sets when using  a $(v,k,1)$-$\BIBD$ as a distribution design.


\begin{theorem}\label{thm:expectbibd}
Suppose $(X,\B)$ is a $(v,k,1)$-$\BIBD$ 
that is used as a distribution design for a combinatorial RTS, and let $P_i$ be any
player in the scheme. Then
\[E(p) = (r-1)^kp^k.\]
\end{theorem}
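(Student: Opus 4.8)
The plan is to count the minimal repair sets exactly and then apply linearity of expectation, exactly as was done for the specific design in Example \ref{STS9-2.ex}. First I would fix the block $B_i = \{x_1,\dots,x_k\}$ associated with $P_i$ and recall from the proof of Theorem \ref{thm:existsBIBD} that the sets $\C_1,\dots,\C_k$ of Definition \ref{rel.defn} each have size $r-1$ and are pairwise disjoint. The disjointness is the crucial structural fact, and it comes directly from $\lambda=1$: any two points $x_j,x_{j'}$ of $B_i$ already lie together in $B_i$, so no other block can contain both; equivalently, no player other than $P_i$ holds more than one of the subshares $s_{x_1},\dots,s_{x_k}$.

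Next I would use this to pin down the form of a minimal repair set. To repair $P_i$ one needs all $k$ subshares $s_{x_1},\dots,s_{x_k}$, and $s_{x_j}$ can only be supplied by a player in $\P_j$. Since the $\P_j$ are disjoint, a subset of players is a repair set if and only if it meets every $\P_j$, and it is minimal precisely when it meets each $\P_j$ in exactly one player. Hence the minimal repair sets are exactly the transversals of $\P_1,\dots,\P_k$, each of which consists of exactly $k$ distinct players. Counting these is then immediate: there are $|\P_1|\cdots|\P_k| = (r-1)^k$ of them.

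Finally, since every minimal repair set contains exactly $k$ players and players are available independently with probability $p$, each minimal repair set is available with probability $p^k$. By linearity of expectation, $E(p) = (r-1)^k p^k$.

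The only real obstacle is the second step, namely correctly characterizing the minimal repair sets. It is tempting to count ``one choice per point'' without checking that the resulting sets genuinely have $k$ elements and are actually minimal; both facts hinge on the $\lambda=1$ property guaranteeing disjointness of the $\P_j$, so that no single player can simultaneously cover two of the required points. Once disjointness is in hand, the count and the expectation computation are entirely routine.
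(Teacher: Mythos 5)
Your proposal is correct and follows essentially the same route as the paper: identify the minimal repair sets with the elements of $\P_1 \times \cdots \times \P_k$, count them as $(r-1)^k$, and apply linearity of expectation with each set available with probability $p^k$. In fact, you supply a justification the paper leaves implicit---that the $\lambda=1$ property forces the $\P_j$ to be disjoint, which is exactly why the minimal repair sets are the transversals and have size exactly $k$---so your write-up is, if anything, slightly more complete than the published proof.
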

\begin{proof}
Let $B_i = \{x_1, \dots , x_k\}$.
The minimal repair sets are precisely the sets in $\P_1 \times \cdots \times \P_k$.
The number of minimal repair sets is therefore $(r-1)^k$.
The probability that a given minimal repair set is available is $p^k$.

Let the minimal repair sets be enumerated as $\M_1, \dots , \M_s$, where $s = (r-1)^k$.
For $1 \leq i \leq s$, let the random variable $\mathbf{X_i}$ be defined as 
\[\mathbf{X_i} = \begin{cases} 1, & \mbox{if $\M_i$ is available}  \\ 0, & \mbox{otherwise.} \end{cases}\]
Clearly $E[\mathbf{X_i}] = p^k$ for all $i$.
Define  \[\mathbf{X} = \mathbf{X_1} + \mathbf{X_2} + \dots + \mathbf{X_s};\]
then \[E[\mathbf{X}] = E[\mathbf{X_1}] + E[\mathbf{X_2}] + \dots + E[\mathbf{X_s}]\] by linearity of expectation.
Therefore,  \[E(p) = E[\mathbf{X}] = sp^k = (r-1)^kp^k.\]
\end{proof}

It would of course be possible to use a  $(v,k,\lambda)$-$\BIBD$ 
as a distribution design even if $\lambda > 1$. Unfortunately, 
there do not seem to be general formulas, analogous to Theorems \ref{thm:existsBIBD} 
and \ref{thm:expectbibd}, for these designs.

\section{Using $t$-Designs as Distribution Designs}
\label{t.sec}

It is also possible to use $t$-$(v,k,1)$-designs with $t>2$ as distribution designs. This idea
has not previously been discussed in the literature. One possible advantage over just using $2$-designs
is that blocks can intersect in more than one point, so a repair may be possible by contacting a smaller
number of other players. Since blocks in a $t$-$(v,k,1)$-design can intersect in up to $t-1$ points,
it follows that a repair can be carried out by contacting $\lceil\frac{k}{t-1}\rceil$ other players,
if they are available.

First, we determine the thresholds that can be achieved,
in particular, by Steiner quadruple systems and inversive geometries. 
Later in this section we analyze the reliability
of the RTS derived from them.

\subsection{Distribution Designs and Thresholds}\label{sect:TdistDes}

For a given distribution design, it is of interest to determine the thresholds
that can be realized in an expanded scheme. This involves choosing values for
$\tau$ and $\sigma$ in such a way that (\ref{eq1}) and (\ref{eq2}) are satisfied, and then
applying Theorem \ref{threshold.thm}.
We provide some results along this line in this section. We note that similar techniques were used 
in \cite{stinson-wei} for $2$-designs.

\begin{theorem}\label{thm:sqsasRTS}
An $\SQS(v)$ can be used as a distribution design to produce an RTS with threshold
$2$.
\end{theorem}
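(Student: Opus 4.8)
The plan is to choose suitable values of $\sigma$ and $\tau$ and then verify that conditions (\ref{eq1}) and (\ref{eq2}) hold, after which Theorem \ref{threshold.thm} immediately delivers the desired expanded scheme. Since we are aiming for threshold $\tau = 2$, condition (\ref{eq2}) concerns only the union of a single block, while condition (\ref{eq1}) concerns the union of any two blocks, so both conditions reduce to very concrete statements about block sizes and pairwise intersections.

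First I would record the basic parameters. An $\SQS(v)$ is a $3$-$(v,4,1)$-design, so every block contains exactly $k = 4$ points. Because a single block therefore has exactly $4$ points, condition (\ref{eq2}) with $\tau - 1 = 1$ requires $4 \leq \sigma - 1$, which forces $\sigma \geq 5$. I would accordingly try the minimal admissible value $\sigma = 5$.

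The one substantive step is to bound the intersection of two distinct blocks. I claim that any two distinct blocks of an $\SQS(v)$ meet in at most two points: if two distinct blocks shared three common points, that triple would be contained in two blocks, contradicting the defining property $\lambda = 1$ that every set of three points occurs in exactly one block. Hence $|B \cap B'| \leq 2$ for distinct blocks $B, B'$, and by inclusion--exclusion $|B \cup B'| \geq 4 + 4 - 2 = 6$.

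With $\sigma = 5$ in hand, verification is immediate. The union of any two blocks has at least $6 \geq 5 = \sigma$ points, so (\ref{eq1}) holds; and the union of any single block has exactly $4 = \sigma - 1$ points, so (\ref{eq2}) holds. Applying Theorem \ref{threshold.thm} with $\tau = 2$ and $\sigma = 5$ then yields an expanded $(2,n)$-threshold scheme, where $n$ is the number of blocks of the $\SQS(v)$. I do not expect any genuine obstacle; the only real content is the block-intersection bound, which follows directly from the $\lambda = 1$ condition on triples, and one could equally note that $\sigma = 6$ also satisfies both conditions.
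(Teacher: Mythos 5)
Your proof is correct and takes essentially the same approach as the paper: bound the intersection of two distinct blocks of an $\SQS(v)$ by two points (via the $\lambda=1$ property on triples), deduce that any two blocks cover at least six points, and then apply Theorem \ref{threshold.thm} with $\tau = 2$. The only difference is that you take $\sigma = 5$ where the paper takes $\sigma = 6$; both choices satisfy (\ref{eq1}) and (\ref{eq2}), as you yourself observe.
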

\begin{proof} 
Let $\tau = 2$ and $\sigma = 6$. It is clear that one block in an $\SQS(v)$ contains exactly four points.
Two blocks contain at least six points, because two blocks intersect in at most two points.
Therefore, (\ref{eq1}) and (\ref{eq2}) are satisfied when $\tau = 2$ and $\sigma = 6$, and
we obtain an expanded scheme with threshold $2$. 
\end{proof}

Now, we show how to construct RTS with threshold $3$ from certain $t$-designs.

\begin{theorem}\label{thm:get3threshold}
A $t$-$(v,k,1)$-design can be used as a distribution design to produce an RTS with threshold
$3$ if $k\geq 3t-2$.
\end{theorem}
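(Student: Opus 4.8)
The plan is to invoke Theorem~\ref{threshold.thm} with $\tau = 3$, so the entire task reduces to exhibiting a single value of $\sigma$ for which conditions (\ref{eq1}) and (\ref{eq2}) both hold in a $t$-$(v,k,1)$-design. Condition (\ref{eq2}) requires the union of any two blocks to contain at most $\sigma-1$ points, while (\ref{eq1}) requires the union of any three blocks to contain at least $\sigma$ points. First I would pin down the two bounds these conditions impose on $\sigma$, and then check that the hypothesis $k \geq 3t-2$ makes the resulting range for $\sigma$ nonempty.

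For (\ref{eq2}), note that two distinct blocks each have $k$ points, and in the worst case they are disjoint, so their union has at most $2k$ points. Hence (\ref{eq2}) holds precisely when $\sigma - 1 \geq 2k$, i.e.\ $\sigma \geq 2k+1$. This motivates the clean choice $\sigma = 2k+1$.

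For (\ref{eq1}) the key structural fact is that in a $t$-$(v,k,1)$-design any two distinct blocks meet in at most $t-1$ points: if they shared $t$ points, that $t$-subset would lie in two blocks, contradicting $\lambda = 1$. Applying inclusion--exclusion to three blocks $B_1, B_2, B_3$ gives
\[
|B_1 \cup B_2 \cup B_3| = 3k - \big(|B_1\cap B_2| + |B_1\cap B_3| + |B_2\cap B_3|\big) + |B_1\cap B_2\cap B_3|.
\]
Bounding each pairwise intersection above by $t-1$ and the triple intersection below by $0$ yields $|B_1\cup B_2\cup B_3| \geq 3k - 3(t-1)$. Thus (\ref{eq1}) holds with $\sigma = 2k+1$ as soon as $3k - 3(t-1) \geq 2k+1$, which rearranges exactly to $k \geq 3t-2$.

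Putting these together, the choice $\sigma = 2k+1$ satisfies both (\ref{eq1}) and (\ref{eq2}) under the stated hypothesis, and Theorem~\ref{threshold.thm} then delivers the threshold-$3$ expanded scheme. The only genuine content is the lower bound on the union of three blocks, so I expect this inclusion--exclusion step to be the crux; everything else is arithmetic. A minor point to watch is that I am using only the worst-case bounds (pairwise intersections as large as $t-1$, triple intersection as small as $0$), so I do \emph{not} need to argue that such an extremal triple actually occurs in the design---I only need the inequality to hold for every triple, which these bounds guarantee.
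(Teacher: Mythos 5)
Your proof is correct and takes essentially the same route as the paper: both rest on the facts that two blocks of a $t$-$(v,k,1)$-design meet in at most $t-1$ points (since $\lambda=1$), so three blocks cover at least $3k-3(t-1)$ points while two blocks cover at most $2k$, and both reduce to the arithmetic $3k-3(t-1)\geq 2k+1 \Leftrightarrow k\geq 3t-2$. The only cosmetic difference is your choice $\sigma=2k+1$ versus the paper's $\sigma=3k-3(t-1)$ (the two endpoints of the same admissible interval); incidentally, the paper's proof opens with ``Let $\tau=2$,'' an evident typo for $\tau=3$, which you use correctly.
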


\begin{proof}
Let $\tau = 2$ and $\sigma = 3k - 3(t-1)$.
Clearly the union of any two blocks contains at most $2k$ points.
Now consider three blocks. If any two of these blocks have $t-1$ points in common,
and these three intersections are disjoint, then the three blocks contain
$3k - 3(t-1)$ points, which is the minimum possible. In order for (\ref{eq1}) and (\ref{eq2}) to be satisfied,
we require $3k - 3(t-1) \geq 2k +1$, which is equivalent to $k\geq 3t-2$. If this inequality is
satisfied, then the expanded scheme has threshold $3$.
\end{proof}

More generally, we have the following result, which has a similar proof.

\begin{theorem}\label{thm:getanythreshold}
A $t$-$(v,k,1)$-design can be used as a distribution design to produce an RTS with threshold
$\tau$ if $k\geq \binom{\tau}{2}(t-1)+1 $.
\end{theorem}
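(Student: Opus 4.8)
The plan is to generalize the proof of Theorem~\ref{thm:get3threshold} by setting $\tau$ to the desired value and choosing $\sigma$ to be the minimum possible size of a union of $\tau$ blocks, then verifying that conditions (\ref{eq1}) and (\ref{eq2}) both hold under the hypothesis $k \geq \binom{\tau}{2}(t-1)+1$. First I would set $\sigma = \tau k - \binom{\tau}{2}(t-1)$. The intuition is that each of the $\tau$ blocks contributes $k$ points, but each of the $\binom{\tau}{2}$ pairs of blocks can overlap in up to $t-1$ points (since in a $t$-$(v,k,1)$-design any two blocks meet in at most $t-1$ points), so in the most economical configuration the union loses $\binom{\tau}{2}(t-1)$ points. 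This makes (\ref{eq1}) immediate by construction, provided the extremal configuration is actually achievable or at least provides a valid lower bound on the union size of any $\tau$ blocks.

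The two things I would need to check are exactly (\ref{eq1}) and (\ref{eq2}). For (\ref{eq1}), I must argue that the union of any $\tau$ blocks contains \emph{at least} $\sigma = \tau k - \binom{\tau}{2}(t-1)$ points; this follows from inclusion--exclusion truncated at the pairwise level, since $|\bigcup_{i} B_i| \geq \sum_i |B_i| - \sum_{i<j} |B_i \cap B_j| \geq \tau k - \binom{\tau}{2}(t-1)$, using $|B_i \cap B_j| \leq t-1$. For (\ref{eq2}), I must show the union of any $\tau - 1$ blocks contains \emph{at most} $\sigma - 1$ points. The union of $\tau-1$ blocks trivially contains at most $(\tau-1)k$ points, so it suffices to verify $(\tau-1)k \leq \sigma - 1 = \tau k - \binom{\tau}{2}(t-1) - 1$, which rearranges to $\binom{\tau}{2}(t-1) + 1 \leq k$, precisely the stated hypothesis. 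Once both inequalities are in hand, Theorem~\ref{threshold.thm} yields an expanded scheme with threshold $\tau$.

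The main obstacle I anticipate is the lower bound direction of (\ref{eq1}), not because the Bonferroni inequality is hard, but because one should be careful that the truncated inclusion--exclusion bound $|\bigcup B_i| \geq \sum |B_i| - \sum |B_i \cap B_j|$ is genuinely a valid \emph{lower} bound (it is, as a standard Bonferroni inequality), rather than needing the extremal configuration described in Theorem~\ref{thm:get3threshold} to literally exist in the design. The cleaner route avoids any existence question about the extremal configuration: the Bonferroni bound holds for \emph{every} choice of $\tau$ blocks, so combined with $|B_i \cap B_j| \leq t-1$ it directly certifies (\ref{eq1}). I would present the argument in this form, which specializes correctly to $\tau = 3$ (recovering $\sigma = 3k - 3(t-1)$ and the condition $k \geq 3t-2 = \binom{3}{2}(t-1)+1$) and thereby confirms consistency with the previous theorem.
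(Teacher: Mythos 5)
Your proposal is correct and follows essentially the same route as the paper: the same choice $\sigma = \tau k - \binom{\tau}{2}(t-1)$, the same trivial upper bound $(\tau-1)k$ for the union of $\tau-1$ blocks, and the same reduction of both conditions to the hypothesis $k \geq \binom{\tau}{2}(t-1)+1$ before invoking Theorem~\ref{threshold.thm}. The one refinement is that where the paper justifies the lower bound on the union of $\tau$ blocks by appealing informally to the extremal configuration (pairwise intersections of size $t-1$, all disjoint), you instead use the truncated inclusion--exclusion (Bonferroni) inequality together with $|B_i \cap B_j| \leq t-1$, which certifies the bound for every choice of $\tau$ blocks without any appeal to an extremal configuration --- a slightly more rigorous rendering of the same step.
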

\begin{proof}
Let $\sigma = \tau k- \binom{\tau}{2}(t-1)$.  Clearly $\tau-1$ blocks contain at most $(\tau-1)k$ points.
For a set of $\tau$ blocks, the minimum size of their union results when any two of them contain
$t-1$ common points, and these intersections are all disjoint.  So the union contains
at least $\tau k - \binom{\tau}{2}(t-1)$ points. In order for (\ref{eq1}) and (\ref{eq2}) to be satisfied,
we require $\tau k - \binom{\tau}{2}(t-1) \geq (\tau-1)k +1$, which is equivalent to 
$k\geq \binom{\tau}{2}(t-1)+1 $. If this inequality is
satisfied, then the expanded scheme has threshold $\tau$.
\end{proof}

The inversive geometries allow us to construct RTS with any desired threshold.
Taking $t=3$ in Theorem \ref{thm:getanythreshold}, we have the following corollary.

\begin{corollary}\label{cor:getanythreshold}
A $3$-$(v,k,1)$-design can be used as a distribution design to produce an RTS with threshold
$\tau$ if $k\geq \tau(\tau -1)+1 $.
\end{corollary}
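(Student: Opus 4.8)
The plan is to invoke Theorem \ref{thm:getanythreshold} directly with the specialization $t=3$. A $3$-$(v,k,1)$-design is by definition nothing other than a $t$-$(v,k,1)$-design with $t=3$, so the full machinery of Theorem \ref{thm:getanythreshold}---the construction of the expanded scheme via the distribution design, the choice of $\sigma$, and the verification of conditions (\ref{eq1}) and (\ref{eq2})---applies verbatim. The only thing to do is to rewrite the general threshold condition $k \geq \binom{\tau}{2}(t-1)+1$ in this particular case.

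Substituting $t=3$ gives $t-1 = 2$, so that $\binom{\tau}{2}(t-1) = 2\binom{\tau}{2} = 2 \cdot \frac{\tau(\tau-1)}{2} = \tau(\tau-1)$. Hence the hypothesis $k \geq \binom{\tau}{2}(t-1)+1$ collapses to $k \geq \tau(\tau-1)+1$, which is exactly the inequality in the statement. Correspondingly, the threshold parameter becomes $\sigma = \tau k - \binom{\tau}{2}(t-1) = \tau k - \tau(\tau-1)$, inherited unchanged from the parent theorem. I would state these two simplifications and conclude that, whenever $k \geq \tau(\tau-1)+1$, the expanded scheme has threshold $\tau$.

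There is essentially no obstacle here: the corollary is a pure numerical specialization, and its entire content reduces to the binomial identity $2\binom{\tau}{2} = \tau(\tau-1)$. The only conceptual remark worth attaching is that inversive geometries furnish $3$-$(v,k,1)$-designs with $k = n+1$ for every prime power $n$, so $k$ can be taken arbitrarily large; this is what guarantees that \emph{every} threshold $\tau$ is realizable in principle, which is the point flagged in the sentence preceding the statement.
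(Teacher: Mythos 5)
Your proposal is correct and matches the paper exactly: the paper derives this corollary by the same one-line specialization, setting $t=3$ in Theorem \ref{thm:getanythreshold} so that $\binom{\tau}{2}(t-1) = \tau(\tau-1)$. The remark about inversive geometries supplying arbitrarily large $k$ is also precisely the point the paper makes just before stating the corollary.
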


\begin{remark}
In order to obtain $\tau = 3$, we require $k \geq 7$ in Corollary \ref{cor:getanythreshold};
to obtain $\tau = 4$, we require $k \geq 13$, etc.
\end{remark}

\subsection{Reliability}

In our analysis, to compute the reliability metrics for  repair sets, we employ the use of \textit{cutsets} from network reliability theory (see Colbourn \cite{colbourn} for basic results and terminology relating to network reliability).
When using BIBDs as distribution designs,  we were able to easily compute reliability formulas 
in Section \ref{bibd.sec} without the use of this methodology because the sets
$\C_j$ were disjoint. However, it is advantageous to use cutsets to 
analyze the reliability of the RTS constructed using distribution designs with $t \geq 3$.

In this section, for brevity, we will conflate the notion of players and blocks and express all our 
arguments in terms of blocks of the distribution design $(X,\B)$.


\begin{definition}
A \textit{cutset} for a block $B$ is a minimal subset of blocks $\B'$ 
such that a repair is not possible if all the blocks in $\B'$ are not available. 
A cutset \emph{fails} if every block in the cutset is not available.
\end{definition}

\begin{lemma}\label{def:failedC_i}
Let $B = \{x_1,\dots, x_k\}$ be a block in the distribution design.
Then the sets $\C_j$, for $1 \leq j \leq k$, are the  cutsets.
\end{lemma}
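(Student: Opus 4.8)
The plan is to translate the notion of a cutset into a simple availability criterion for repair, and then read off the minimal cuts directly. First I would establish the basic repair criterion: to repair the block $B$ one must recover each subshare $s_{x_j}$ with $x_j \in B$, and the only blocks other than $B$ that hold $s_{x_j}$ are precisely those in $\C_j$ (Definition \ref{rel.defn}). Consequently, a repair is possible if and only if, for every $j$ with $1 \le j \le k$, at least one block of $\C_j$ is available. I would stress that this criterion is unaffected by the fact that a single available block may contribute several subshares at once when $t \ge 3$ (so that the $\C_j$ may overlap): what matters for repairability is only whether each individual $\C_j$ still contains an available block.

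From this criterion the cut property becomes transparent. Since repairability is monotone in the set of unavailable blocks, a set $\B'$ has the property that its total unavailability prevents repair exactly when the failure scenario $U = \B'$ already defeats repair, i.e.\ exactly when $\C_j \subseteq \B'$ for some $j$. Indeed, if $\C_j \subseteq \B'$ then $s_{x_j}$ cannot be obtained; conversely, if $\C_j \not\subseteq \B'$ for every $j$, then each $\C_j$ retains an available block and repair succeeds. Hence the cuts for $B$ are precisely the sets containing some $\C_j$, and the cutsets are the minimal such sets.

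It then remains to show that each $\C_j$ is itself a minimal cut and that there are no others. The key observation is that all the sets $\C_j$ have the same cardinality: by the replication number formula (Theorem \ref{thm:r_i} with $i=1$), every point of the design lies in exactly $r_1$ blocks, so $|\C_j| = r_1 - 1$ for every $j$. Given this, minimality is automatic: a proper subset $S \subsetneq \C_j$ has $|S| \le r_1 - 2 < r_1 - 1 = |\C_{j'}|$, so $S$ cannot contain any $\C_{j'}$ and is therefore not a cut; thus $\C_j$ is a minimal cut. Conversely, any minimal cut $\B'$ contains some $\C_j$, and since $\C_j$ is already a cut, minimality forces $\B' = \C_j$. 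Therefore the cutsets are exactly $\C_1, \dots, \C_k$.

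I expect the only real subtlety to be pinning down the cut property correctly, in particular recognizing that, by monotonicity, $\B'$ is a cut precisely when $U = \B'$ already defeats repair, which reduces the whole question to the containment condition $\C_j \subseteq \B'$. Once this is in place, the uniform cardinality $|\C_j| = r_1 - 1$ does all the remaining work for minimality, and no appeal to the (genuinely more intricate, for $t \ge 3$) overlap structure of the $\C_j$ is needed for this lemma.
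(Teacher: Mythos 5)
Your proof is correct, but there is essentially nothing in the paper to compare it against: the paper states this lemma bare, with no proof at all, treating it as an immediate unwinding of Definition \ref{rel.defn} and the definition of a cutset (the companion Lemma \ref{def:repairSetAvailable} is likewise left unproved). Your writeup fills in the details in the natural way, and it is careful at the two points where care is actually needed. First, your monotonicity remark correctly resolves the quantification in ``a repair is not possible if all the blocks in $\B'$ are not available,'' reducing the cut property to the containment condition that $\C_j \subseteq \B'$ for some $j$; en route you have in effect also proved Lemma \ref{def:repairSetAvailable}. Second, minimality of each $\C_j$ is not completely automatic: it would fail if some $\C_{j'}$ were properly contained in $\C_j$, and your uniform-cardinality argument ($|\C_j| = r_1 - 1$ for every $j$, by Theorem \ref{thm:r_i} with $i = 1$) rules this out cleanly, without invoking the overlap structure of the $\C_j$'s that the paper only needs later for its inclusion-exclusion computations. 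So your proof is sound and, if anything, more explicit than what the paper provides.
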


\begin{example}
Here are the blocks in an  $3$-$(8,4,1)$-design:
 		\begin{align*}
	 {A_1} &= {\{1,2,3,4\}} \quad 
	& {A_2} &= {\{5,6,7,8\}}	\\ 
	{B_1} &= {\{1,2,5,6\}}	\quad 
	& {B_2} &= {\{1,2,7,8\}}	\\ 
	 {B_3} &= {\{1,3,5,7\}} \quad
	& {B_4} &= {\{1,3,6,8\}}	\\
	 {B_5} &= {\{1,4,5,8\}}	\quad 
	& {B_6} &= {\{1,4,6,7\}} \\
	 {B_7} &= {\{3,4, 7,8\}} \quad
	& {B_8} &= {\{3,4,5,6\}} \\ 
	 {B_9} &= {\{2,4,6,8\}} \quad
 &{B_{10}} &= {\{2,4,5,7\}} \\ 
  {B_{11}} & = {\{2,3,6,7\}} \quad 
 & {B_{12}} &= {\{2,3,5,8\}}
\end{align*}

Suppose $A_1$ wants to repair their share. Then, the relevant cutsets are
\begin{align*}
\C_1 &= \{B_1,B_2,B_3,B_4,B_5, B_6\}\\
\C_2 &= \{B_1,B_2,B_9,B_{10},B_{11},B_{12} \}\\ 
\C_3 &= \{ B_3,B_4,B_7,B_{8},B_{11},B_{12}\}\\
\C_4 &= \{ B_5,B_6,B_7,B_{8},B_{9},B_{10}\}.
\end{align*}
\hfill $\blacksquare$
\end{example}


\begin{lemma}\label{def:repairSetAvailable}
Let $B = \{x_1,\dots, x_k\}$ be a block in the distribution design $(X, \B)$.
There exists an available repair set for $B$ if and only if 
no $\C_j$, for $1 \leq j \leq k$, fails. 
\end{lemma}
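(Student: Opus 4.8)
The plan is to prove both directions of the biconditional directly from the definitions, using the observation that a repair set for $B$ is precisely a collection of blocks (other than $B$) whose union covers every point of $B$, together with the fact that availability is a conjunctive condition: a set of blocks is available exactly when all its members are. The key structural fact, already isolated in Lemma~\ref{def:failedC_i}, is that the subshare indexed by $x_j$ can only be supplied by a block lying in $\C_j$, so covering $x_j$ by an available block is equivalent to $\C_j$ not failing.

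First I would dispatch the easy direction ($\Leftarrow$). Assume no $\C_j$ fails. Then for each $j$ with $1 \le j \le k$ there is at least one available block, say $B^{(j)} \in \C_j$; since $B^{(j)}$ contains $x_j$, the collection $\{B^{(1)}, \dots, B^{(k)}\}$ consists entirely of available blocks and its union contains all of $x_1, \dots, x_k$, hence covers $B$. This is therefore an available repair set for $B$. I would remark that this set need not be minimal when $t \ge 3$, since a single block may cover several points of $B$, but minimality is irrelevant for mere existence.

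For the converse ($\Rightarrow$), I would argue directly. Suppose $\P$ is an available repair set for $B$. By the definition of a repair set, for every point $x_j \in B$ some block of $\P$ contains $x_j$; call it $B'$. Because $B' \neq B$ and $x_j \in B'$, we have $B' \in \C_j$, and because $\P$ is available, $B'$ is available. Hence $\C_j$ contains an available block and therefore does not fail. Since this holds for every $j$, no $\C_j$ fails.

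There is essentially no hard step; the whole proof is a matter of carefully unwinding the definition of a repair set (it must cover all points of $B$) and of an available set (all members available). The one point requiring a little care is to make explicit that every point $x_j$ of $B$ must be supplied during a repair, so that a single $\C_j$ failing is fatal, while conversely one available block chosen from each $\C_j$ already suffices. This is exactly the content recorded by identifying the $\C_j$ with the cutsets in Lemma~\ref{def:failedC_i}, so the present lemma is really the statement that a system functions precisely when none of its cutsets fail.
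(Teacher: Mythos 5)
Your proof is correct. The paper states this lemma without any proof, treating it as an immediate consequence of identifying the sets $\C_j$ as the cutsets (Lemma \ref{def:failedC_i}), and your two-direction argument---choosing one available block from each $\C_j$ for sufficiency, and locating each supplied point $x_j$ inside some available block of $\C_j$ for necessity---is precisely the definitional unwinding the paper leaves implicit.
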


\subsection{Existence of Available Repair Sets for $t$-$(v, k, 1)$ Designs}

First, we consider Steiner quadruple systems, as a warmup. Then we generalize our
formulas to arbitrary $t$-$(v, k, 1)$ designs.

\begin{theorem}\label{thm:existSQS}
Suppose $(X,\B)$ is an $\SQS(v)$ and let $B = \{x_1,x_2,x_3,x_4\} \in \B$.
Let $q=1-p$, where $p$ is the probability that a block is available. Then 
\[\R(p) =1-4q^{r_1-1} +6q^{2r_1-r_2-1} -4q^{3r_1-3r_2} +q^{4r_1-6r_2+2},\]
where $r_1= \binom{v-1}{2}/3$ and 
$r_2= \binom{v-2}{1}/{2}$ are the replication numbers of the $\SQS$. 
\end{theorem}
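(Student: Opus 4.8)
The plan is to invoke Lemma \ref{def:repairSetAvailable}, which says that an available repair set for $B$ exists if and only if none of the cutsets $\C_1, \dots , \C_4$ fails. Writing $A_j$ for the event that $\C_j$ fails (that is, every block in $\C_j$ is unavailable), I therefore want
\[\R(p) = \prob\!\left[\,\bigcap_{j=1}^4 \overline{A_j}\,\right] = 1 - \prob\!\left[\,\bigcup_{j=1}^4 A_j\,\right],\]
and I would compute the right-hand probability by inclusion-exclusion over the four events $A_j$.

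The engine of the computation is the observation that, since block availabilities are independent and each block is unavailable with probability $q$, the probability that a specified collection of cutsets all fail simultaneously is $q^{|\C_{j_1} \cup \cdots \cup \C_{j_\ell}|}$: the exponent is simply the number of distinct blocks that must be unavailable. So the whole argument reduces to evaluating the cardinalities of unions of cutsets, which I would in turn obtain from their pairwise, triple, and quadruple intersections via a second (purely set-theoretic) inclusion-exclusion.

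The key design-theoretic input is that an intersection $\C_{j_1} \cap \cdots \cap \C_{j_\ell}$ is exactly the set of blocks \emph{other than} $B$ that contain all of the points $x_{j_1}, \dots , x_{j_\ell}$. By Theorem \ref{thm:r_i} the number of blocks through a given set of $i$ points is $r_i$, and one of those blocks is always $B$ itself, so the intersection has size $r_\ell - 1$. Thus $|\C_j| = r_1 - 1$ and $|\C_j \cap \C_{j'}| = r_2 - 1$. Crucially, because an $\SQS$ is a $3$-$(v,4,1)$-design, any three points lie in exactly $r_3 = 1$ block, which is necessarily $B$; hence every triple intersection of cutsets is empty, and the quadruple intersection is empty as well (a block of size $4$ through all four points must be $B$). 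Feeding these into the set-level inclusion-exclusion gives $|\C_j \cup \C_{j'}| = 2r_1 - r_2 - 1$, then $|\C_j \cup \C_{j'} \cup \C_{j''}| = 3r_1 - 3r_2$, and finally $|\C_1 \cup \C_2 \cup \C_3 \cup \C_4| = 4r_1 - 6r_2 + 2$. Substituting these exponents, together with the binomial counts $\binom{4}{1}, \binom{4}{2}, \binom{4}{3}, \binom{4}{4}$, into the probabilistic inclusion-exclusion yields the stated formula. The values $r_1 = \binom{v-1}{2}/3$ and $r_2 = (v-2)/2$ are just the specialization of Theorem \ref{thm:r_i} to $t = 3$, $k = 4$, $\lambda = 1$.

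The only real pitfall I anticipate is bookkeeping: keeping the two layers of inclusion-exclusion straight — one over events, producing the alternating signs and the binomial coefficients out front, and one over sets, producing the exponents of $q$ — and remembering to subtract $B$ itself from every intersection count. Once the vanishing of the triple and quadruple point-intersections is established (the single genuinely design-theoretic point), everything else is routine substitution.
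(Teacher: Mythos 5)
Your proposal is correct and follows essentially the same route as the paper's proof: Lemma \ref{def:repairSetAvailable}, inclusion-exclusion over the four failure events with $\prob$ of a joint failure equal to $q$ raised to the cardinality of the corresponding union of cutsets, and those cardinalities computed from $|\C_j| = r_1 - 1$, $|\C_j \cap \C_{j'}| = r_2 - 1$, and the vanishing of triple intersections. If anything, you are slightly more explicit than the paper in justifying why triple and quadruple intersections are empty (via $r_3 = 1$ in a $3$-$(v,4,1)$-design), a point the paper leaves implicit.
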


\begin{proof}
From Lemma \ref{def:repairSetAvailable}, a repair set exists if no $\C_j$ fails, $1 \leq j \leq 4$. Therefore, 
\[\R(p)=1-\prob[\text{at least one } \C_j \text{ fails}].\]
For $1 \leq j \leq 4$, let $E_j$ denote the event that $\C_j$ fails.
We have
\[\prob[\text{at least one } \C_j \text{ fails}] = 
\prob[E_1 \text{ or } E_2 \text{ or } E_3 \text{ or } E_4].\]
We note the following.
\begin{enumerate}
    \item  $|\C_j|=r_1-1$  for $1 \leq j \leq 4$. Therefore,  \[\prob[E_i]=q^{r_1-1}.\] 
    \item  $|\C_j \cup \C_{j'}| = 2(r_1-1)-(r_2-1) = 2r_1-r_2-1$, for all $j,j', j \neq j'$. Therefore,  
    \[\prob[E_j \text{ and } E_{j'}] =q^{2r_1-r_2 -1}.\] 
    \item  $|\C_j \cup \C_{j'} \cup \C_{j''}|= 3(r_1-1)-3(r_2-1) = 3r_1-3r_2$, for all distinct $j,j',j''$ such that  
    $1 \leq j,j',j'' \leq 4$. Therefore, 
    \[\prob[E_j \text{ and } E_{j'} \text{ and } E_{j''}] =q^{3r_1-3r_2}.\] 
    \item  $|\C_1 \cup \C_2 \cup \C_3 \cup \C_4| = 4(r_1-1)-6(r_2-1) = 4r_1-6r_2+2$. 
    Therefore, \[ \prob[E_1 \text{ and } E_2 \text{ and } E_3 \text{ and } E_4]=q^{4r_1-6r_2+2}.\] 
\end{enumerate}
Applying the principle of inclusion-exclusion, we have 
\[\prob[E_1 \text{ or } E_2 \text{ or } E_3 \text{ or } E_4] = \binom{4}{1}q^{r_1-1} -\binom{4}{2}q^{2r_1-r_2-1} + \binom{4}{3}q^{3r_1-3r_2} - \binom{4}{4}q^{4r_1-6r_2+2}.\]
Therefore, 
\[\R(p)=1-4q^{r_1-1} + 6q^{2r_1-r_2-1} - 4q^{3r_1-3r_2} + q^{4r_1-6r_2+2}.\]
\end{proof}

The following can be proven in a similar manner.

\begin{theorem}\label{thm:genExist}
Suppose $(X,\B)$ is a  $t$-$(v, k, 1)$ design  and let $B \in \B$.
Let $q=1-p$, where $p$ is the probability that a block is available.  Then
\[ \R(p)=1- \binom{k}{1} q^{e_1} + \binom{k}{2}q^{e_2}- \binom{k}{3}q^{e_3} + \dots   + (-1)^{k+1}\binom{k}{k}q^{e_k},\]
where 
 $r_j=\frac{\binom{v-j}{t-j}}{\binom{k-j}{t-j}}$,
 for $1 \leq j \leq t$, are the replication numbers of the design, and
\[e_i = \sum_{j=1}^{\min\{i,t-1\}} (-1)^{j+1} \binom{i}{j}(r_j -1),\]
for $1 \leq i \leq k$.
\end{theorem}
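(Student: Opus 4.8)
The plan is to mirror the two-level inclusion--exclusion argument used in the proof of Theorem \ref{thm:existSQS}, now carried out for arbitrary $k$. By Lemma \ref{def:repairSetAvailable}, an available repair set for $B$ exists precisely when none of the cutsets $\C_1, \dots, \C_k$ fails, so I would start from
\[ \R(p) = 1 - \prob[\text{at least one } \C_j \text{ fails}]. \]
Letting $E_j$ denote the event that $\C_j$ fails, the outer step is to expand $\prob[E_1 \cup \cdots \cup E_k]$ by the principle of inclusion--exclusion over the $k$ failure events.

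The heart of the argument is the evaluation of the joint failure probabilities. Since blocks are available independently with probability $p = 1-q$, for any $S \subseteq \{1,\dots,k\}$ the event $\bigcap_{j\in S} E_j$ is exactly the event that every block in $\bigcup_{j\in S}\C_j$ is unavailable, whence $\prob[\bigcap_{j\in S} E_j] = q^{|\bigcup_{j\in S}\C_j|}$. Everything therefore reduces to computing the cardinality $|\bigcup_{j\in S}\C_j|$, which I would obtain by a second, inner inclusion--exclusion:
\[ \Bigl| \bigcup_{j\in S}\C_j \Bigr| = \sum_{\emptyset \neq T \subseteq S} (-1)^{|T|+1}\Bigl| \bigcap_{j\in T}\C_j \Bigr|. \]

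The key design-theoretic input enters here. The set $\bigcap_{j\in T}\C_j$ consists of all blocks other than $B$ that contain every point of $\{x_j : j \in T\}$. By Theorem \ref{thm:r_i}, for $|T| \leq t$ the number of blocks through these $|T|$ points is the replication number $r_{|T|}$, and exactly one of them is $B$ itself (as $\{x_j : j \in T\} \subseteq B$); hence $|\bigcap_{j\in T}\C_j| = r_{|T|} - 1$, a quantity depending only on $|T|$ and not on which points are selected. Moreover the terms with $|T| \geq t$ drop out: when $|T| = t$ we have $r_t = 1$ so $r_t - 1 = 0$, and when $|T| > t$ any block containing all those points already contains a $t$-subset lying only in $B$, so the intersection is empty. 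Grouping the surviving terms by $\ell = |T|$ shows that $|\bigcup_{j\in S}\C_j|$ depends on $S$ only through $i = |S|$ and equals $\sum_{\ell=1}^{\min\{i,t-1\}} (-1)^{\ell+1}\binom{i}{\ell}(r_\ell - 1) = e_i$, which is exactly where the upper limit $\min\{i,t-1\}$ in the definition of $e_i$ comes from.

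With this in hand the outer inclusion--exclusion collapses cleanly, since all $\binom{k}{i}$ subsets $S$ of size $i$ contribute the identical value $q^{e_i}$:
\[ \prob[E_1 \cup \cdots \cup E_k] = \sum_{i=1}^{k} (-1)^{i+1}\binom{k}{i}q^{e_i}, \]
and subtracting from $1$ yields the stated alternating-sum formula, with the signs agreeing with the $k=4$ instance in Theorem \ref{thm:existSQS} (the coefficient of $q^{e_i}$ being $(-1)^i\binom{k}{i}$). I expect the only delicate point to be the inner cardinality computation---verifying that the cutset intersections are governed by the constant replication numbers $r_\ell$ and that the $|T| \geq t$ contributions vanish---since this is precisely where the full $t$-$(v,k,1)$ structure ($r_\ell$ independent of the chosen points for $\ell \leq t$, together with $r_t = 1$) is used.
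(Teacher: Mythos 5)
Your proposal is correct and follows essentially the same argument as the paper's proof: an outer inclusion--exclusion over the cutset failure events $E_1,\dots,E_k$, combined with an inner inclusion--exclusion computing $e_i = \bigl|\bigcup_{j\in S}\C_j\bigr|$ for $|S|=i$, using the fact that intersections of $t$ or more of the $\C_j$ are empty. Your write-up is in fact somewhat more explicit than the paper's, since it spells out via Theorem \ref{thm:r_i} why $\bigl|\bigcap_{j\in T}\C_j\bigr| = r_{|T|}-1$ depends only on $|T|$ and why the terms with $|T|\geq t$ vanish, details the paper leaves implicit.
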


\begin{proof}
We use similar notation as in the proof of Theorem \ref{thm:existSQS}.
We compute  \[\prob[\text{at least one } \C_j \text{ fails}] = 
\prob[E_1 \text{ or } E_2 \text{ or } E_3 \text{ or } E_4 \text{ or } \cdots \text{ or } E_k].\]

Let $e_i$ denote the cardinality of the union of $i$ of the sets $\C_1, \dots , \C_k$.
We will apply the principle of inclusion-exclusion to compute the values of the $e_i$'s. Note that we make use of the
fact that no block intersects $B$ in more than $t-1$ points, so the intersection of $t$ or more of the sets
$\C_1, \dots , \C_k$ is empty. Therefore, 
\begin{eqnarray*}
    e_1  &=& r_1 -1\\
    e_2  &=& 2(r_1 -1) - (r_2-1)\\
    e_3  &=& 3(r_1-1) - 3(r_2-1) + (r_3-1),
\end{eqnarray*}
etc., where no sum contains terms past $r_{t-1}$. 
In general, \[e_i = \sum_{j=1}^{\min\{i,t-1\}} (-1)^{j+1} \binom{i}{j}(r_j -1),\]
for $1 \leq i \leq k$. 

Now that we have computed $e_i$ for each $i$, we can evaluate the probability that any number 
of  $C_i$'s fail. Recall that $q=1-p$, where $p$ is the probability the player with that share is available. 
A second application of the principle of inclusion-exclusion yields the desired result:
\[\R(p) = 1- \binom{k}{1} q^{e_1} + \binom{k}{2}q^{e_2}- \binom{k}{3}q^{e_3} + \dots   
+ (-1)^{k+1}\binom{k}{k}q^{e_k}.\]
\end{proof}

\subsection{Expected Number of Minimal Repair Sets for SQS}

In general, we can determine the expected number of repair sets for a given
distribution design if we know all the minimal repair sets. The following 
formula, which is proven in the same fashion as Theorem \ref{thm:expectbibd}, can be used.

\begin{theorem}
\label{gen-exp.thm}
Suppose $(X,\B)$ is a distribution design and
let the minimal repair sets be enumerated as $\M_1, \dots , \M_s$.
Then 
\[ E(p) = \sum_{j=1}^{s} p^{|\M_i|}.\]
\end{theorem}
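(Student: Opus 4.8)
The plan is to proceed exactly as in the proof of Theorem \ref{thm:expectbibd}, replacing the explicit count $(r-1)^k$ and the uniform exponent $k$ by the general enumeration $\M_1,\dots,\M_s$ and the individual set sizes $|\M_j|$. Since $E(p)$ is by definition the expected number of available minimal repair sets, I would set up a counting random variable and invoke linearity of expectation.

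First I would recall the availability model from Section \ref{rel.sec}: each player is available independently with probability $p$, and a repair set is available precisely when every player it contains is available. Hence, for a fixed minimal repair set $\M_j$, the event that $\M_j$ is available is the intersection of $|\M_j|$ independent single-player availability events, so by independence its probability is $p^{|\M_j|}$.

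Next, for $1 \leq j \leq s$ I would introduce the indicator random variable
\[
\mathbf{X_j} = \begin{cases} 1, & \text{if $\M_j$ is available} \\ 0, & \text{otherwise,} \end{cases}
\]
so that $E[\mathbf{X_j}] = p^{|\M_j|}$ by the previous step. Writing $\mathbf{X} = \mathbf{X_1} + \cdots + \mathbf{X_s}$, the random variable $\mathbf{X}$ counts the number of available minimal repair sets, whence $E(p) = E[\mathbf{X}]$. Linearity of expectation then yields
\[
E(p) = E[\mathbf{X}] = \sum_{j=1}^{s} E[\mathbf{X_j}] = \sum_{j=1}^{s} p^{|\M_j|},
\]
which is the claimed formula.

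There is no substantive obstacle here: the argument is a direct generalization of Theorem \ref{thm:expectbibd}. It is worth emphasizing that linearity of expectation requires no independence \emph{among} the $\mathbf{X_j}$, which is fortunate, since distinct minimal repair sets may share players and hence their indicators are generally not independent. The only place where the probabilistic assumptions are genuinely used is the per-set computation $E[\mathbf{X_j}] = p^{|\M_j|}$, which relies on the independence of individual player availabilities stated in Section \ref{rel.sec}.
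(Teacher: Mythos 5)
Your proof is correct and is exactly the argument the paper intends: the paper explicitly states that Theorem \ref{gen-exp.thm} ``is proven in the same fashion as Theorem \ref{thm:expectbibd},'' i.e., by indicator random variables and linearity of expectation, which is precisely what you do (generalizing the uniform exponent $k$ to the individual sizes $|\M_j|$). Your added remark that no independence among the indicators is needed---only independence of individual player availabilities---is a correct and worthwhile clarification, and your formula also silently fixes the paper's index typo ($|\M_i|$ should read $|\M_j|$).
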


Of course, for an arbitrary distribution design, there can be minimal repair
sets of various sizes. For example, 
in the case of Steiner quadruple systems, minimal repair sets can be of size 
two, three or four. 

We illustrate the computation of the expected number of available minimal repair sets
on a particular design, namely, the $\SQS(10)$. 

\begin{example}
\label{SQS10.ex}
Here is the (unique) $3$-$(10,4,1)$-design:
	\begin{align*}
	 {A_0} &= {\{1, 2, 4,5\}} \quad 
	& {B_0} &= {\{1,2,3,7\}}	\quad
	 & {C_0} &= {\{1,3,5,8\}}	\\
	 {A_1} &= {\{2,3,5,6\}} \quad 
	& {B_1} &= {\{2,3,4,8\}}	\quad
	 & {C_1} &= {\{2,4,6,9\}}	\\
	 {A_2} &= {\{3,4,6,7\}} \quad 
	& {B_2} &= {\{3,4,5,9\}}	\quad
	 & {C_2} &= {\{3,5,7,0\}}	\\
	 {A_3} &= {\{4,5,7,8\}} \quad 
	& {B_3} &= {\{4,5,6,0\}}	\quad
	 & {C_3} &= {\{4,6,8,1\}}	\\
	 {A_4} &= {\{5,6,8,9\}} \quad 
	& {B_4} &= {\{5,6,7,1\}}	\quad
	 & {C_4} &= {\{5,7,9,2\}}	\\
	 {A_5} &= {\{6,7,9,0\}} \quad 
	& {B_5} &= {\{6,7,8,2\}}	\quad
	 & {C_5} &= {\{6,8,0,3\}}	\\
	 {A_6} &= {\{7,8,0,1\}} \quad 
	& {B_6} &= {\{7,8,9,3\}}	\quad
	 & {C_6} &= {\{7,9,1,4\}}	\\
	 {A_7} &= {\{8,9,1,2\}} \quad 
	& {B_7} &= {\{8,9,0,4\}}	\quad
	 & {C_7} &= {\{8,0,2,5\}}	\\
	 {A_8} &= {\{9,0,2,3\}} \quad 
	& {B_8} &= {\{9,0,1,5\}}	\quad
	 & {C_8} &= {\{9,1,3,6\}}	\\
	 {A_9} &= {\{0,1,3,4\}} \quad 
	& {B_9} &= {\{0,1,2,6\}}	\quad
	 & {C_9} &= {\{0,2,4,7\}}	
\end{align*}

Suppose we want to repair the block $A_0 = \{1,2,4,5\}$.
We consider minimal repair sets of sizes $2,3$ and $4$ in turn.
All the  computations are applications of Theorem \ref{gen-exp.thm}.

A repair set of size two consists of 
\begin{itemize}
\item a block containing ${1,2}$ and a block containing ${4,5}$; or 
\item a block containing ${1,4}$ and a block containing ${2,5}$; or 
\item a block containing ${1,5}$ and a block containing ${2,4}$.
\end{itemize}
The total number of choices for these two blocks is $3 \times 3 \times 3 = 27$
(there are three subcases, and in each subcase there are three choices of each 
of the two blocks).
Therefore, the expected number of minimal repair sets of size two is  $27p^2$.

A minimal repair set of size four consists of four blocks having the 
following form:
\begin{itemize}
\item a block  containing ${1}$, but none of ${2,4,5}$
\item a block  containing ${2}$, but none of ${1,4,5}$
\item a block  containing ${4}$, but none of ${1,2,5}$
\item a block  containing ${5}$, but none of ${1,2,4}$.
\end{itemize}
There are {two choices} for each of these four blocks, so the total number of choices is $2^4 = 16$.
Therefore, the expected number of minimal repair sets of size four is ${16p^4}$.

A {minimal} repair set of size three can have three possible 
forms:
\begin{description}
\item [type {pair-pair-pair}:] three pairs intersecting in a point,
e.g.,  ${12, 14, 15}$. There are {four} configurations of this type.
 \item [type {pair-pair-point}:] two pairs intersecting in a point,
 and a disjoint point
e.g., ${12, 14, 5}$. There are {twelve} configurations of this type.
 \item [type {pair-point-point}:] one pair,
 and two disjoint points, 
e.g., ${12, 4, 5}$. There are {six} configurations of this type.
\end{description}
After some counting, the expected number of minimal repair sets of size three is 
seen to be
\[ ({4} \: {\times \: 3^3} + {12} \: {\times \: 3^2 \times 2} + {6} \: {\times \: 3 \times 2^2}) {p^3} = {396 p^3}.\]
 Finally, we have 
\[ E(p) = 27p^2 + 396 p^3 + 16p^4.\]
\hfill $\blacksquare$
\end{example}

The general case of an $\SQS(v)$ is similar. We tabulate the expected number of minimal repair sets of the various
types in the Table \ref{tab1}.

\begin{table}
\caption{Expected number of repair sets for $\SQS(v)$}
\label{tab1}
\begin{center}
\begin{tabular}{c|c|c}
size & type & expected number \\ \hline
2 & & $3(r_2-1)^2 p^2$\\
3 & pair-pair-pair & $4(r_2-1)^3 p^3$ \\
3 & pair-pair-point & $12(r_2-1)^2(r_1-3r_2+2)  p^3$ \\
3 & pair-point-point & $6(r_2-1)(r_1-3r_2+2)^2 p^3$ \\
4 & & $(r_1-3r_2+2)^4 p^4$
\end{tabular}
\end{center} 
\end{table}

We can now combine the expected number of repair sets for each size and type to 
produce the expected number of repair sets, which we record in the following theorem.

\begin{theorem}\label{thm:expectSQS}
Suppose $(X,\B)$ is an $\SQS(v)$ and let $B \in \B$.
Let $q=1-p$, where $p$ is the probability that a block is available. 
Then   
\[ E(p) = 3(r_2-1)^2 p^2 + 2 (r_2 - 1) (3 {r_1}^2 - 12 r_1 r_2 + 6 r_1 + 11 {r_2}^2 - 10 r_2 + 2) p^3 + (r_1-3r_2+2)^4 p^4.\]
\end{theorem}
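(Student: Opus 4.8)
The plan is to apply Theorem \ref{gen-exp.thm}, which reduces the computation to enumerating the minimal repair sets for $B$ by size and weighting each by the appropriate power of $p$. The starting observation is structural: since $(X,\B)$ is a $3$-$(v,4,1)$ design, any block other than $B$ meets $B = \{x_1,x_2,x_3,x_4\}$ in at most two points (three common points would force two distinct blocks through a triple, contradicting $\lambda = 1$). Hence every block appearing in a repair set covers either one or two of the four points of $B$, and a minimal repair set is precisely a minimal cover of $\{x_1,x_2,x_3,x_4\}$ by such ``singleton'' and ``pair'' blocks. The possible sizes are therefore $2$, $3$, and $4$, and I would organize the count exactly as in Table \ref{tab1}.

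First I would record the two basic block counts. A fixed pair $\{x_i,x_j\} \subseteq B$ lies in $r_2 - 1$ blocks other than $B$; this accounts for every ``pair'' cover. A ``singleton'' cover of $x_i$ is a block containing $x_i$ but none of the other three points of $B$; by inclusion--exclusion over the blocks through $x_i$ (using $r_1$, $r_2$, and $r_3 = 1$, and noting that the only block through any triple of $B$ is $B$ itself, so the higher-order terms vanish), the number of such blocks is $r_1 - 3r_2 + 2$. With these two quantities in hand, each row of Table \ref{tab1} follows from a short enumeration of configurations: the $3$ ways to split $\{x_1,x_2,x_3,x_4\}$ into two pairs (size $2$); the $4$ ``star'' triples of pairs through a common point, the $12$ choices of (common point of two overlapping pairs, isolated point), and the $6$ choices of which pair to cover (the three size-$3$ types); and the single all-singletons configuration (size $4$). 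A point worth checking during the enumeration is minimality: for the pair-pair-pair type one must verify that only the star configurations survive, since a triple of pairs with no common point always contains a redundant block.

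Having justified Table \ref{tab1}, the theorem is obtained by summing its five entries and grouping by the power of $p$. The $p^2$ and $p^4$ coefficients are read off directly as $3(r_2-1)^2$ and $(r_1-3r_2+2)^4$. The only real work is the $p^3$ coefficient, where I would factor out $2(r_2-1)$ from the three size-$3$ contributions to write
\begin{align*}
& 4(r_2-1)^3 + 12(r_2-1)^2(r_1-3r_2+2) + 6(r_2-1)(r_1-3r_2+2)^2 \\
&\qquad = 2(r_2-1)\bigl(2(r_2-1)^2 + 6(r_2-1)(r_1-3r_2+2) + 3(r_1-3r_2+2)^2\bigr),
\end{align*}
and then expand the bracketed expression as a polynomial in $r_1$ and $r_2$. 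The expected obstacle is this final expansion: it is purely mechanical, but it is the one step where sign and coefficient errors can creep in, and one must confirm that it collapses to $3r_1^2 - 12 r_1 r_2 + 6 r_1 + 11 r_2^2 - 10 r_2 + 2$. I would sanity-check the whole computation against Example \ref{SQS10.ex}, where $r_1 = 12$ and $r_2 = 4$ give $r_2 - 1 = 3$ and $r_1 - 3r_2 + 2 = 2$, reproducing $E(p) = 27p^2 + 396p^3 + 16p^4$.
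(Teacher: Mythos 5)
Your proposal is correct and follows essentially the same route as the paper: it invokes Theorem \ref{gen-exp.thm}, classifies minimal repair sets by size and intersection type exactly as in Table \ref{tab1} (with the same counts $3(r_2-1)^2$, $4(r_2-1)^3$, $12(r_2-1)^2(r_1-3r_2+2)$, $6(r_2-1)(r_1-3r_2+2)^2$, and $(r_1-3r_2+2)^4$, the last two quantities derived from the same observations that a pair of points of $B$ lies in $r_2-1$ other blocks and a point lies in $(r_1-1)-3(r_2-1)$ blocks meeting $B$ only in that point), and then sums and expands to get the stated $p^3$ coefficient. Your added minimality check for the pair--pair--pair type and the numerical verification against Example \ref{SQS10.ex} are sound refinements of the same argument, not a different method.
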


\section{Discussion and Summary}
\label{sum.sec}

We have introduced the problem of studying reliability of combinatorial
RTS. We employed techniques from network reliability theory to aid in 
the derivation of some of our formulas. Perhaps this approach will prove
useful in other combinatorial design problems that can be phrased
in terms of network reliability.

The material in this paper is from the Masters Thesis of the first author \cite{thesis}.
The thesis \cite{thesis} also studies efficient algorithms to actually find
a repair set for various kinds of distribution designs.

\bibliographystyle{amsplain}



\end{document}